\newtheorem{theorem}{Theorem}
\newtheorem{corollary}[theorem]{Corollary}
\newtheorem{lemma}[theorem]{Lemma}
\newtheorem{remark}[theorem]{Remark}
\newtheorem{conjecture}[theorem]{Conjecture}
\newtheorem*{cor1}{Theorem \ref{thm:numc}}
\newtheorem*{cordeg}{Theorem \ref{thm-dense}}
\newtheorem*{lemcon}{Lemma \ref{lemma-concave}}
\newtheorem*{lemfun}{Lemma \ref{lemma-funnelweight}}
\newtheorem*{lemsla}{Lemma \ref{l:a}}
\newtheorem*{lemmain}{Lemma \ref{lemma-main-ineq}}
\DeclareMathOperator{\perm}{perm}
\begin{document}
\title{Bounding the number of cycles in a graph in terms of its degree sequence}
\author{ 
	Zden\v{e}k Dvo\v{r}\'ak\thanks{Computer Science Institute (CSI) of Charles University,
		Malostransk{\'e} n{\'a}m{\v e}st{\'\i} 25, 118 00 Prague, 
		Czech Republic. E-mail: \protect\href{mailto:rakdver@iuuk.mff.cuni.cz}{\protect\nolinkurl{rakdver@iuuk.mff.cuni.cz}}.
		Supported by the Neuron Foundation for Support of Science under Neuron Impuls programme.} \and Natasha Morrison\thanks{
	IMPA, Estrada Dona Castorina 110, Jardim Bot\^{a}nico, Rio de Janeiro, 22460-320, Brazil
	and Sidney Sussex College, Sidney Street, University of Cambridge, CB2 3HU, UK. E-mail: \protect\href{mailto:morrison@dpmms.cam.ac.uk}{\protect\nolinkurl{morrison@dpmms.cam.ac.uk}}.}
	\and Jonathan A. Noel\thanks{Mathematics Institute and DIMAP, University of Warwick, CV4 7AL Coventry, UK. E-mail: \protect\href{mailto: J.Noel@warwick.ac.uk}{\protect\nolinkurl{ J.Noel@warwick.ac.uk}}. Supported by the Leverhulme Trust Early Career Fellowship ECF-2018-534. } \and Sergey Norin\thanks{Department of Mathematics and Statistics, McGill University, Montr\'{e}al, Quebec, Canada.  E-mail: \protect\href{mailto: 
			sergey.norin@mcgill.ca}{\protect\nolinkurl{sergey.norin@mcgill.ca}}. Supported by an NSERC Discovery grant.} \and Luke Postle\thanks{Department of Combinatorics and Optimization,
		University of Waterloo,
		Waterloo, Ontario, Canada. E-mail: \protect\href{mailto: 
			lpostle@uwaterloo.ca}{\protect\nolinkurl{lpostle@uwaterloo.ca}}. Canada Research Chair in Graph Theory. Partially supported by NSERC
		under Discovery Grant No. 2019-04304, the Ontario Early Researcher Awards program and the Canada
		Research Chairs program.}}
\date{\today}
\maketitle
\begin{abstract}
We give an upper bound on the number of cycles in a simple graph in terms of its degree
sequence, and apply this bound to resolve several conjectures of Kir\'aly~\cite{kiraly2009maximum}
and Arman and Tsaturian~\cite{arman2017maximum} and to improve upper bounds on the maximum number of cycles in a planar graph.
\end{abstract}

\section{Introduction}

What is the largest number of cycles that can appear in a graph with $m$ edges?  Trivially, there
are at most $2^m$ cycles, and it is not hard to find examples with an exponential number of cycles. Kir\'aly~\cite{kiraly2009maximum} conjectured that the number of cycles in a graph with $m$ edges is $O(1.4^m)$. Our first main result proves this conjecture. 

\begin{theorem}\label{thm:numc}
Let $\gamma = 442368^{1/10} \approx 1.384$. A simple graph with $m$ edges has at most $O(\gamma^m)$ cycles.
\end{theorem}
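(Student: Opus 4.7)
My plan proceeds in two stages mirroring the lemmas signposted in the paper's header.

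\emph{Stage 1: a per-vertex bound on cycles.} I would first establish an inequality of the form
\[
c(G)\;\le\;\prod_{v \in V(G)} w(d_G(v)),
\]
where $c(G)$ denotes the number of cycles of $G$ and $w\colon \mathbb{Z}_{\ge 2}\to \mathbb{R}_{>0}$ is an explicit weight function. The natural route is induction on $|V(G)|$: remove a carefully chosen vertex $v$ and argue that the number of cycles drops by at most a factor of $w(d_G(v))$. Cycles through $v$ correspond to unordered pairs of edges at $v$ together with a path in $G-v$ joining their other endpoints, giving at most $\binom{d(v)}{2}$ ``types.'' The difficulty is that individual paths in $G-v$ are not themselves easy to bound, so a direct ``delete-a-vertex'' step is too weak. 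The resolution is an amortisation in which the weight of a path is spread across several of its internal vertices; this is presumably the content of Lemma~\ref{lemma-funnelweight} and Lemma~\ref{lemma-main-ineq}. Vertices of degree at most $2$ can be suppressed (contracting such a vertex does not change $c(G)$), so I may assume minimum degree at least $3$ throughout the induction.

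\emph{Stage 2: optimisation over degree sequences.} Once $c(G)\le \prod_v w(d_v)$ is in hand, I would invoke log-concavity of $w$ on $\{3,4,\dots\}$, the role of Lemma~\ref{lemma-concave}. Under the linear constraint $\sum_v d_v = 2m$, and using that minimum degree $\ge 3$ implies $|V(G)| \le 2m/3$, log-concavity forces $\prod_v w(d_v)$ to be maximised at a near-regular degree sequence. A finite calculation then identifies the optimal degree $d^\ast$; the constant $\gamma$ arises as the per-edge quantity $w(d^\ast)^{2/d^\ast}$, which evaluates to $442368^{1/10}$. Regimes where the average degree grows with $m$ are handled separately by Theorem~\ref{thm-dense}, which gives a strictly better bound there and thus does not govern the worst case.

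\emph{Main obstacle.} The hard part is Stage 1. A naive vertex-deletion argument fails because a single path in $G-v$ can extend to cycles through many other vertices, so charging each cycle entirely to $v$ double-counts against the weights at those vertices. Designing the weight $w$ together with an induction (or direct multiplicative argument, cf.\ Lemma~\ref{l:a}) in which the double-counting cancels cleanly, and which is sharp enough to yield $\gamma\approx 1.384$, is the main technical challenge. By contrast, the optimisation in Stage 2 is a concavity argument plus a bounded case check, and the final conversion of $\prod_v w(d_v)$ into $O(\gamma^m)$ is routine given log-concavity and the degree-sum identity.
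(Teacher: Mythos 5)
Your Stage~2 is on target: the paper does optimize a product $\prod_v b(\deg v)$ over degree sequences via log-concavity of $\log b$ (Lemma~\ref{lemma-concave}), and the constant $\gamma$ is indeed $\sup_k b(k)^{2/k}=b(5)^{2/5}$, a per-edge quantity achieved at $d^\ast=5$. (Your invocation of Theorem~\ref{thm-dense} for the high-degree regime is unnecessary: $b(k)^{2/k}\le\gamma$ holds uniformly in $k$, so no case split is needed.) But Stage~1 contains a genuine gap, and the form of the bound you announce there is in fact false.

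The paper never proves a bound of the shape $c(G)\le\prod_v w(\deg v)$, and one cannot with the weight $b$ that would give the claimed $\gamma$: already $K_4$ has $7$ cycles while $\prod_v b(3)=b(3)^4=4\rho^2\approx 4.62$. Enlarging $w$ to repair this would inflate $\gamma$. What the paper proves instead (Theorem~\ref{thm-paths}) is a bound on the number $p_{s,t}(G)$ of $s$--$t$ \emph{paths}, with an asymmetric weight $a(\deg s)\prod_{v\ne s} b(\deg v)$ that privileges the source. This change of target is precisely what dissolves the obstacle you correctly identify: removing $v$ and trying to control cycles through $v$ requires understanding $\binom{\deg v}{2}$ pairwise path counts in $G-v$, whereas removing the single distinguished endpoint $s$ yields the clean linear recursion $p_{s,t}(G)=\sum_i l_i\,p_{r_i,t}(G-s)$ over the neighbours $r_i$ of $s$, which is exactly the form the algebraic Lemma~\ref{lemma-main-ineq} can absorb. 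Your proposed fix --- ``spread the weight of a path over its internal vertices'' --- is not a mechanism; it names the difficulty without resolving it, and the path-count reformulation is the missing idea. Note also that your reduction to minimum degree $\ge 3$ by suppressing degree-$2$ vertices is not available in the simple-graph setting: suppression can create parallel edges, and the gain over the multigraph bound of Theorem~\ref{thm-multi} comes precisely from exploiting simplicity (via Lemma~\ref{lemma-funnelbase}), so you cannot discard it.

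Finally, even granting a path bound, the passage to cycles is itself a nontrivial step you would need to supply: the paper attaches pendant vertices $s,t$ to the two ends of each edge $e$, sums $p_{s,t}(G_e)$ over all $e$, observes that each cycle of length $>m/11$ is counted at least $m/11$ times (absorbing the extraneous factor of $m$), and bounds the $o(\gamma^m)$ short cycles separately via $\sum_{j\le m/11}\binom{m}{j}$. Your proposal, aiming directly at cycles, has no analogue of this accounting, and as noted the direct multiplicative bound it posits is not true.
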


Currently the best known lower bound is $\Omega(\kappa_1^m)$ for $\kappa_1=(2+2\sqrt{2})^{1/5}\approx 1.3701$ by Arman and Tsaturian~\cite{arman2017maximum} (see Figure~\ref{fig:k4chain}). 
\begin{figure}
\begin{center}
\begin{tikzpicture}[scale=1.2,rotate=0]
\newdimen\R
\R=2cm
\newdimen\r
\r=1cm
\def\len{9}

\newcommand{\add}[2]{\dimexpr#1+#2\relax}

\newdimen\avg
\avg=\add{\R/2}{\r/2}

\tikzset{smallblack/.style={circle, draw=black!100,fill=black!100,thick, inner sep=0pt, minimum size=1.3mm}}

 \draw ({270 + (180/\len)}:\R)node[smallblack] (v1){}\foreach \x in {2,...,\len}
 {--({270+((\x-1)*360/\len) + (180/\len)}:\R) node[smallblack] (v\x){}};
 
 \draw(270:\avg)node(dots){$\cdots$};
 
 \draw ({270 + (180/\len)}:\r)node[smallblack] (u1){}\foreach \x in {2,...,\len}
 {--({270+((\x-1)*360/\len) + (180/\len)}:\r) node[smallblack] (u\x){}};
 
 \foreach \x in {1,...,\len}
 \draw(u\x)--(v\x);
 
 \foreach \x [count=\xx from 1] in {2,...,\len}
 \draw(u\x)--(v\xx);
 \foreach \x [count=\xx from 1] in {2,...,\len}
 \draw(v\x)--(u\xx);
\end{tikzpicture}
\caption{The lower bound construction of Arman and Tsaturian~\cite{arman2017maximum} giving simple graphs with $\Omega(\kappa_1^m)$ cycles for $\kappa_1=(2+2\sqrt{2})^{1/5}\approx 1.3701$.}\label{fig:k4chain}
\end{center}
\end{figure}

We believe that this construction is the best possible
(we verified by computer a number of similar constructions and could not find a better one).
\begin{conjecture}\label{conj-cyc}
Every simple graph with $m$ edges has at most $O(\kappa_1^m)$ cycles.
\end{conjecture}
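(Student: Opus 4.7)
The plan is to sharpen the degree-sequence bound behind Theorem~\ref{thm:numc}. One expects that the argument leading to $\gamma^m$ in fact yields a bound of the form $\#\text{cycles}\le O\bigl(\prod_v c(d(v))\bigr)$ for some function $c$, optimised, subject to $\sum_v d(v)=2m$, at the $4$-regular profile, with $c(4)^{1/2}=\gamma$. Because the extremal construction of Arman and Tsaturian is $4$-regular, any improvement must come from sharpening the \emph{local} contribution at degree-$4$ vertices from $c(4)=\gamma^2$ down to $c(4)=\kappa_1^2=(2+2\sqrt 2)^{2/5}$. Once this is achieved, a standard convexity/smoothing argument (delete leaves, suppress degree-$2$ vertices, use log-convexity of $c$ to rule out degrees $\ge 5$) reduces the problem to the $4$-regular case, and the sharpened local bound then delivers the conjectured estimate.

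To sharpen $c(4)$, the natural idea is to record, at each degree-$4$ vertex traversed by a cycle, which of the three perfect matchings on the four incident edges is used as the (entering, leaving) pair. A cycle then corresponds to a closed walk in an auxiliary digraph whose vertices are such transition-states, and whose arcs follow the edges of the host graph. The total number of cycles is bounded by a trace of powers of the corresponding transfer matrix, and hence by its spectral radius; one aims to prove that this spectral radius is at most $\kappa_1$ per edge. With the Arman--Tsaturian graph as a template for the correct weighting, this becomes a concrete spectral estimate on an auxiliary matrix associated to any $4$-regular simple graph.

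The principal obstacle is exactly this spectral estimate. Since the vertex-by-vertex analysis underlying Theorem~\ref{thm:numc} already saturates at the $\gamma$ rate on a single degree-$4$ vertex, no local argument treating vertices independently can close the gap; one must correlate the transitions chosen at adjacent vertices. This probably demands either a genuinely global spectral or entropy argument on the auxiliary transfer digraph, or a structural theorem asserting that every $4$-regular simple graph far from the Arman--Tsaturian family has strictly fewer cycles than $\kappa_1^m$. Both routes appear to require ideas beyond those available in the current proof of Theorem~\ref{thm:numc}, which is why we state the bound only as a conjecture.
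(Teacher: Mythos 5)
This is a conjecture in the paper, not a theorem: the paper offers no proof of it, and you correctly stop short of claiming one yourself. So there is no paper argument to compare against; what you have written is a discussion of plausibility and obstacles, not a proof, and should not be judged as one.

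That said, your discussion contains a substantive factual error about where the sharpening is needed. You assert that the degree-sequence bound is optimised at the $4$-regular profile and that the Arman--Tsaturian construction is $4$-regular, so that one must push $c(4)$ down from $\gamma^2$ to $\kappa_1^2$. Neither is true. The paper defines $\gamma=\sup_k b(k)^{2/k}=b(5)^{2/5}$, so the worst case for the present bound is degree \emph{five}, and the Arman--Tsaturian graph (a chain of $K_4$'s in which consecutive copies share an edge) is asymptotically $5$-regular, not $4$-regular. Moreover, the paper's Theorem~\ref{thm:4-reg} already shows that graphs of average degree $4$ have only $O(1.365^m)$ cycles, and $1.365<\kappa_1\approx 1.3701$; the Remark after Corollary~\ref{cor-avgdeg} records that $\gamma_d>\kappa_1$ only for $4.24<d<7.18$. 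So a proof along your lines would have to sharpen the local contribution at degrees $5$, $6$, $7$ — exactly the range where the present degree-sequence bound is weakest — and not at degree $4$, which is already below the conjectured threshold. The transfer-matrix idea of encoding, at each traversed vertex, the (in, out) edge pair and bounding a spectral radius per edge is a reasonable direction to propose, but as you yourself note it requires correlating choices at adjacent vertices and is well beyond a local vertex-by-vertex argument; it is a sketch of a programme, not an argument with a gap that could be filled by routine work.
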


In fact, Theorem~\ref{thm:numc} is a consequence of a much more general result (Theorem~\ref{thm-paths}). For a graph $G$ with (not necessarily distinct) vertices $s$ and $t$, let $p_{s,t}(G)$ denote
the number of distinct paths from $s$ to $t$ in $G$. Theorem~\ref{thm-paths} provides a bound on $p_{s,t}(G)$ as a function of the degree sequence of $G$, which easily implies a bound on the number of cycles in $G$ (see the proof of Theorem~\ref{thm:numc} for details). As Theorem~\ref{thm-paths} is fairly technical, we defer the statement to Section~\ref{sec:main_results}.

Theorem~\ref{thm-paths} also yields new bounds for the number of cycles in simple graphs with small average degree. For average degree at most four, the following theorem improves on a result of Kir\'aly~\cite{kiraly2009maximum} of $O((\sqrt{2}-\varepsilon)^m)$
for a very small (but fixed) $\varepsilon>0$. It also proves his conjecture that 4-regular graphs have at most $1.99^{m/2}$ cycles.

\begin{theorem}\label{thm:4-reg}
A simple graph  with $m$ edges and average degree 4 has $O(1.365^m)$ cycles. 
\end{theorem}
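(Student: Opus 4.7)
My strategy is to deduce Theorem~\ref{thm:4-reg} from Theorem~\ref{thm-paths} by converting the path bound into a cycle bound and then optimizing over degree sequences with average $4$.

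For the first step, observe that for every cycle $C$ in $G$ and every $e = st \in E(C)$, the graph $C - e$ is an $s$-$t$ path in $G - e$, and the map $(C, e) \mapsto (C-e, e)$ is injective. Since each cycle contains at least three edges,
\[
\#\{\text{cycles in }G\} \;\le\; \frac{1}{3}\sum_{e = st \in E(G)} p_{s,t}(G - e).
\]
This reduces the problem to bounding $\sum_{st \in E(G)} p_{s,t}(G - st)$ by $O(1.365^m)$ when $G$ is simple with $m$ edges and $\sum_v d(v) = 4n$.

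For the second step, I would apply Theorem~\ref{thm-paths} to each $G - st$ and write the resulting per-edge estimate in terms of the degree sequence of $G$ (noting that $d_{G-st}(s) = d_G(s) - 1$ and similarly for $t$, while other degrees are unchanged). I expect the bound from Theorem~\ref{thm-paths} to take an essentially product form $\prod_v \phi(d(v))$ in the degree sequence, so the sum over edges contributes only a polynomial factor in $m$. Next, subject to the constraint $\sum_v d(v) = 4n$, I would apply Lemma~\ref{lemma-concave} (and, if needed, Lemma~\ref{lemma-funnelweight} to handle vertices of small degree via a reweighting argument) to argue that the product is maximized when the degree sequence is constant, that is, when $d(v) \equiv 4$. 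Plugging the $4$-regular degree sequence into the Theorem~\ref{thm-paths} estimate and evaluating numerically should then yield a per-edge bound of order $1.365^{m}$, and absorbing the polynomial prefactor into the exponential base (at the cost of a negligible enlargement) finishes the argument.

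The main obstacle, as in the proof of Theorem~\ref{thm:numc}, is extracting the explicit constant $1.365$ from Theorem~\ref{thm-paths} evaluated at the $4$-regular degree sequence: this is an optimization whose exact form depends on the auxiliary parameters introduced in Theorem~\ref{thm-paths}, and the number $1.365$ will emerge only after selecting these parameters optimally for $d \equiv 4$ (as opposed to the values optimal for the general bound $\gamma \approx 1.384$). A secondary difficulty is the convexity reduction itself: vertices of degree $\le 1$ lie on no cycle and can be deleted in preprocessing, while vertices of degree $2$ can be suppressed by replacing paths through them with single edges, so that the resulting graph has minimum degree $\ge 3$ and the concavity-based reduction to the $4$-regular case applies cleanly without boundary issues.
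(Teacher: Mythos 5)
Your overall route --- converting cycle counting into $s$-$t$ path counting by deleting an edge, applying Theorem~\ref{thm-paths}, and maximizing the resulting degree-sequence product via the concavity of $\log b$ --- is essentially the paper's approach (packaged in Corollary~\ref{cor-avgdeg}). Two details differ harmlessly: the paper uses the auxiliary graph $G_e$ built by attaching pendant vertices $s,t$ to the ends of $e$ (so that the degrees of all original vertices of $G$ are unchanged, making the per-edge estimate exactly $\alpha(1)\prod_v b(\deg_G(v))$ with no correction factor), and the paper counts each long cycle $\Theta(m)$ times rather than $3$ times; your factor-of-$3$ variant costs only a polynomial factor in $m$, which is absorbed because $\gamma_4=b(4)^{1/2}\approx 1.3643<1.365$.

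Two points need correction, though. First, there are no free parameters in Theorem~\ref{thm-paths} to ``select optimally for $d\equiv 4$'': the function $b$ is fixed once and for all, and $1.365$ is simply a numerical upper bound on $\gamma_4=b(4)^{1/2}$, which falls out of $\prod_v b(\deg(v))=b(4)^n=(b(4)^{1/2})^m$ for a $4$-regular graph. The improvement over the general constant $\gamma=b(5)^{2/5}\approx 1.384$ comes purely from evaluating at degree $4$ instead of at the worst-case degree $5$; no re-optimization occurs. Second, suppressing degree-$2$ vertices is not legitimate as stated: if the two neighbours $u,w$ of a degree-$2$ vertex are already adjacent, suppression creates a parallel edge (so Theorem~\ref{thm-paths} no longer applies) and does not preserve the cycle count. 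It is also unnecessary: Lemma~\ref{lemma-concave} already gives concavity of $\log b(k)$ for $k\ge 2$, so Jensen's inequality applies once the minimum degree is $2$, and only the deletion of degree-$\le 1$ vertices is required (one should then check that the resulting loss of edges compensates for the resulting rise in average degree, which it does, and which the paper's brief deduction also leaves implicit). Finally, Lemma~\ref{lemma-funnelweight} plays no role in this step; it is internal to the proof of Theorem~\ref{thm-paths} itself.
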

Patk\'os~\cite{patkos} gave a construction for $4$-regular graphs with at least $\kappa_2^m$ cycles, where $\kappa_2\approx 1.356$.

Bounds on the maximum number of cycles in planar graphs on $n$ vertices have been previously given by Alt, Fuchs and Kriegel~\cite{AFK99} and  improved upon by Buchin et al.~\cite{BKKSS07}, who have proven a lower bound of $\Omega(2.4262^n)$ and an upper bound of $O(2.8927^n)$. We are also able to improve this bound. 

\begin{theorem}\label{thm:planar}
A planar graph with $n$ vertices has $O(2.643^n)$ cycles.
\end{theorem}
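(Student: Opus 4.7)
The plan is to apply Theorem~\ref{thm-paths} to the planar graph $G$ on $n$ vertices to obtain a cycle bound in terms of the degree sequence, and then use Euler's formula to convert this into a bound depending on $n$ alone. As in the derivation of Theorem~\ref{thm:numc}, the per-edge path bound provided by Theorem~\ref{thm-paths} can be summed over $E(G)$ (observing that every cycle through an edge $uv$ corresponds to a $(u,v)$-path in $G-uv$, and dividing by the cycle length, which is at least $3$) to produce an upper bound of the form
\[
\text{(number of cycles of }G\text{)} \ \le\ C \cdot \prod_{v \in V(G)} h(d_v),
\]
where $h$ is the explicit per-vertex function appearing in Theorem~\ref{thm-paths} and $C$ is a constant.

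To finish, I would solve the constrained optimization of maximizing $\prod_v h(d_v)$ subject to $\sum_v d_v \le 6n - 12$, the bound imposed by Euler's formula on a simple planar graph. If $\log h$ is concave, Jensen's inequality forces the extremum at a near-regular degree sequence with every $d_v$ close to the average $\bar d = 6 - 12/n \approx 6$, and it suffices to verify $h(6) \le 2.643$ numerically. If instead $\log h$ is convex in the relevant range, the maximum is attained on degree sequences with mass concentrated at few vertices, and one must invoke auxiliary planarity constraints beyond the edge count — for instance the existence of a vertex of degree at most $5$, or the full set of Euler-type inequalities — to rule out such extremal sequences.

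The main obstacle is the narrowness of the gap between the target $2.643^n$ and the bound $\gamma^{3n-6} = O(2.651^n)$ that is obtained by merely plugging $m \le 3n-6$ into Theorem~\ref{thm:numc}. Closing this small gap genuinely requires the degree-sequence refinement of Theorem~\ref{thm-paths} rather than the coarser edge-count consequence, together with the fact that no planar degree sequence can concentrate all of its mass at the single worst-case vertex degree arising in the proof of Theorem~\ref{thm:numc}. Carrying out the optimization explicitly — identifying the worst admissible planar degree sequence and making the resulting numerical estimate tight enough to reach $2.643$ — is the technical heart of the argument.
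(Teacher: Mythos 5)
Your plan is essentially the paper's: the paper proves that $\log b(k)$ is concave for $k \ge 2$ (Lemma~\ref{lemma-concave}), packages the resulting Jensen-type optimisation as Corollary~\ref{cor-avgdeg} (a simple graph with $m$ edges, minimum degree at least $2$ and average degree $d$ has $O(\gamma_d^m)=O(b(d)^n)$ cycles), and then observes that a planar graph has average degree below $6$ with $b(6)=(120\rho)^{1/5}<2.643$, yielding $O(b(6)^n)=O(2.643^n)$. Your hedge about the convex case and about invoking additional planarity constraints is unnecessary, since the concave branch is the one that actually holds.
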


Both Theorem~\ref{thm:4-reg} and Theorem~\ref{thm:planar} follow from a more general (but more technical) result (Corollary~\ref{cor-avgdeg}) which gives upper bounds for the number of cycles in simple graphs with constant average degree.
The same approach implies the following asymptotic bound, of interest for graphs with superconstant average degree. 

\begin{theorem}\label{thm-dense}
A simple graph with $n$ vertices and average degree $d$ has at most
$$\left(\left(1+O\Bigl(\frac{\log d}{d}\Bigl)\right)\cdot \frac{d}{e}\right)^n$$
cycles.
\end{theorem}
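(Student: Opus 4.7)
The plan is to derive Theorem~\ref{thm-dense} as the large-$d$ specialization of the path-counting machinery packaged by Theorem~\ref{thm-paths}, along the same lines by which Theorem~\ref{thm:4-reg} and Theorem~\ref{thm:planar} will be extracted from Corollary~\ref{cor-avgdeg}. First I would reduce cycle counting to path counting: every cycle $C$ of length $k\ge 3$ in $G$ is obtained by choosing any edge $uv\in E(C)$ and a $u$–$v$ path of length $k-1$ in $G-uv$, whence the number of cycles $c(G)$ satisfies
\[
c(G) \;\leq\; \tfrac{1}{3}\sum_{uv\in E(G)} p_{u,v}(G-uv).
\]
Theorem~\ref{thm-paths} should then upgrade this into a bound of the shape $c(G)\leq \mathrm{poly}(n)\cdot \prod_{w\in V(G)}\sigma(d_G(w))$ for an explicit per-degree function $\sigma$. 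Since $m = dn/2$ is only polynomial in $n$, the polynomial prefactor will be absorbed into the eventual $1+O(\log d/d)$ factor in the base.

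Second, I would reduce to a uniform-degree extremum. The product $\prod_{w}\sigma(d_G(w))$ is to be maximized over nonnegative integer sequences with sum $dn$. Assuming log-concavity of $\sigma$ on the relevant range — a property that the design of Theorem~\ref{thm-paths} should guarantee, and which is the natural role of the concavity lemma \ref{lemma-concave} referenced elsewhere in the paper — Jensen's inequality yields
\[
\prod_{w\in V(G)} \sigma(d_G(w)) \;\leq\; \sigma(d)^n.
\]
Low-degree vertices (degrees $0$, $1$, $2$, which do not lie on cycles or can be suppressed by leaf-stripping) can be treated separately by elementary arguments, so any mild failure of log-concavity at small arguments does no harm.

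Third — and this is where the real work lies — one must show $\sigma(d) = (1+O(\log d/d))\cdot d/e$ as $d\to\infty$. The appearance of $d/e$ is the fingerprint of a permanental, Br\'egman-type estimate, producing factors of $(d!)^{1/d}$; by Stirling, $(d!)^{1/d} = (d/e)(1+\Theta(\log d/d))$, which matches the target error exactly. I expect the closed-form expression for $\sigma$ furnished by Theorem~\ref{thm-paths} to agree to leading order (and in its first subleading correction) with $(d!)^{1/d}$, reducing the task to a careful Stirling expansion. The main obstacle is precisely this step: the base $(d/e)^n$ is essentially tight, so any slack in the asymptotic analysis of $\sigma$, or in bundling the $\mathrm{poly}(n)$ prefactor into the $1+O(\log d /d)$ factor, would immediately destroy the claimed leading behavior — meaning that the nontrivial part of the argument is not the structural cycles-to-paths-to-product reduction, but the quantitative matching of $\sigma(d)$ to $d/e$ within a $(1+O(\log d/d))$ multiplicative window.
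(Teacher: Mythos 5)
Your proposal matches the paper's primary derivation of this theorem: the paper itself states that Theorem~\ref{thm-dense} follows from Theorem~\ref{thm-paths} ``analogously to Corollary~\ref{cor-avgdeg},'' using the closed form $b(k)=((k-1)!\rho)^{1/(k-1)}$ and Stirling to get $b(k)=\frac{k+O(\log k)}{e}$, together with the concavity of $\log b$ (Lemma~\ref{lemma-concave}) to reduce to the uniform-degree product $b(d)^n$. One small point of perspective: you anticipate that the ``real work'' lies in matching the per-degree weight to $d/e$ within a $1+O(\log d/d)$ window, but once Theorem~\ref{thm-paths} is in hand this step is genuinely immediate because $b$ is given by an explicit factorial formula whose Stirling expansion is trivial; the nontrivial content is entirely in Theorem~\ref{thm-paths}. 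It is worth knowing that the paper also records a second, self-contained proof of Theorem~\ref{thm-dense} that bypasses Theorem~\ref{thm-paths} altogether: it bounds the cycle count by $\perm(A+I)$ (mapping each cycle to the permutation that rotates it and fixes all other vertices) and then applies Br\`{e}gman's Theorem, Karamata's inequality, and Stirling to get $\prod_v (d(v)+1)!^{1/(d(v)+1)} \le (d+1)!^{n/(d+1)}$, which has the desired asymptotics. Your observation that $d/e$ is ``the fingerprint of a permanental, Br\'egman-type estimate'' correctly identifies the mechanism behind that alternate proof, even though your outlined route does not use it.
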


This proves Conjecture~6.1 of Arman and Tsaturian~\cite{arman2017maximum}, and is asymptotically tight as seen
by considering random graphs~\cite{glebkriv}.  In combination with the Erd\H{o}s--Stone theorem~\cite{erdos1946structure}, Theorem~\ref{thm-dense}
implies Theorem~1.4 of Morrison, Roberts and Scott~\cite{morrison2019maximising} that for a fixed graph $H$ of chromatic number
$\chi\ge 3$, a simple $n$-vertex graph not containing $H$ as a subgraph contains at most $\Bigl((1+o(1))\frac{(\chi-2)n}{(\chi-1)e}\Bigr)^n$
cycles. There is also a short direct proof of Theorem~\ref{thm-dense} using Br\`{e}gman's Theorem~\cite{Bregman} (see also~\cite{Schrijver78}) on the permanent of a matrix, which we present in Section~\ref{sec:main_results}.

The key idea in the proof of Theorem~\ref{thm-paths} is to prove a bound in terms of the degree sequence of the graph, which enables us to use inductive arguments. In Section~\ref{sec:main_results}, we introduce this idea by giving a short proof
of an analogous result for multigraphs (Theorem~\ref{thm-multi}). We will then formally state Theorem~\ref{thm-paths} and deduce Theorem~\ref{thm:numc}, Corollary~\ref{cor-avgdeg} (which will imply Theorems~\ref{thm:4-reg} and \ref{thm:planar}) and Theorem~\ref{thm-dense} from it. The proof of Theorem~\ref{thm-paths} is then given in Section~\ref{sec:main_proof}. Unfortunately, this proof relies on a large number of technical calculations. In order to not obscure the key ideas of the proof from the reader, some of these are deferred to Section~\ref{sec:post_proof}.

\section{Main Results}\label{sec:main_results}

\subsection{A  bound for multigraphs}
To introduce the main ideas in the proof of Theorem~\ref{thm-paths}, let us give a short proof of an analogous bound for multigraphs (Theorem~\ref{thm-multi} below). Our bound is a strengthening of a result of Arman and Tsaturian~\cite{arman2017maximum}, who proved that the number of cycles in a multigraph with $m$
edges is at most $O(3^{m/3})$. This bound is tight, as illustrated by the graph
obtained from the cycle of length $m/3$ by replacing each edge by a triple edge.

In order to state Theorem~\ref{thm-multi} we will first introduce some key definitions. 
We will define functions $a', b'$ and $q'$; analogous functions $a, b$ and $q$ will be defined later for the proof of Theorem~\ref{thm-paths}. Let $b'(0)=a'(0)=b'(1)=1$.  Let $b'(k)=k/2$ for $k \geq 2$, and let $a'(k)=2\sqrt{k}$ for $k\ge 1$.  For a graph $G$ with a vertex $s$, let
$q'_s(G)=a'(\deg(s))\prod_{v\in V(G)\setminus \{s\}} b'(\deg(v))$. Recall that for a graph $G$ with (not necessarily distinct) vertices $s$ and $t$, we define $p_{s,t}(G)$ to be
the number of distinct paths from $s$ to $t$ in $G$.

\begin{theorem}\label{thm-multi}
If $G$ is a multigraph, then $p_{s,t}(G)\le q'_s(G)$ for all $s,t \in V(G)$.
\end{theorem}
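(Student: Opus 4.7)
The plan is to induct on $|V(G)|$ using the \emph{first-edge decomposition}: every non-trivial $s$-$t$ path begins with a unique edge from $s$ to some neighbor $v$, so
\[
p_{s,t}(G) \;=\; [s=t] \;+\; \sum_{v\in N(s)} m(sv)\cdot p_{v,t}(G-s),
\]
where $m(sv)$ is the multiplicity of the edge $sv$. The case $s=t$ is immediate since $p_{s,s}(G)=1$ and $q'_s(G)\ge a'(0)=1$, so assume $s\ne t$. Before applying induction I would normalize $G$ by restricting to the induced subgraph on $\{s\}\cup C_t$, where $C_t$ is the component of $t$ in $G-s$: this preserves $p_{s,t}(G)$, does not increase $q'_s(G)$ (since discarded vertices only contribute factors $b'(d_u)\ge 1$ and no vertex degree rises), and ensures that $G-s$ is connected so that every term in the sum above corresponds to a genuinely smaller instance.

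Applying the inductive hypothesis term by term gives $p_{v,t}(G-s)\le q'_v(G-s)$. Writing $D_u:=d_u-m(su)$ for the degree of $u$ in $G-s$ (so $D_u=d_u$ for $u\notin N(s)$), and dividing through by $q'_s(G)=a'(d_s)\prod_{u\ne s}b'(d_u)$, the problem reduces to the analytic inequality
\[
\sum_{v\in N(s)} m(sv)\cdot \frac{a'(D_v)}{b'(d_v)}\cdot\prod_{u\in N(s)\setminus\{v\}} \frac{b'(D_u)}{b'(d_u)} \;\le\; a'(d_s).
\]
I would attack this using the explicit forms $a'(k)=2\sqrt{k}$ and $b'(k)=k/2$ (for $k\ge 2$), combined with Cauchy-Schwarz on the weighted sum over $v$ with weights $m(sv)$ summing to $d_s$, exploiting the concavity of $\sqrt{\cdot}$ underlying $a'(d_s)^2=4d_s$. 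The product factors $b'(D_u)/b'(d_u)\le 1$ provide essential savings---strictly less than one for $u\in N(s)$ with $d_u\ge 2$---and must be retained, since on its own the sum over $v$ may well exceed $a'(d_s)$.

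The main obstacle will be handling the boundary cases where some $D_u\in\{0,1\}$: at these values $a'$ and $b'$ deviate from their asymptotic formulas ($a'(0)=1$, $b'(0)=b'(1)=1$, $a'(1)=2$), and the saving products can degenerate to $1$. On small examples such as the triangle with $s$ of degree two, the naive term-by-term induction gives a bound that exceeds $a'(d_s)$, so the proof must either retain the product tightly or introduce further structural reductions---for instance, handling separately the cases where $s$ has a pendant neighbor, where some $m(sv)=d_v$ (so all of $v$'s edges go to $s$), or where $|N(s)|$ is very small. Working out a clean case analysis covering all remaining configurations, and then verifying the analytic inequality by an AM-GM- or Cauchy-Schwarz-based computation, is what I expect to be the main technical challenge.
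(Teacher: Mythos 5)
Your overall skeleton --- induction on $|V(G)|$, delete $s$, apply the hypothesis to each neighbor, and then close the loop with a Cauchy--Schwarz argument keyed to $a'(k)=2\sqrt{k}$ --- is exactly the paper's strategy, and the final analytic inequality you would arrive at is the same one the paper proves via Cauchy--Schwarz and the fact that $\prod_i(1+x_i)^2\ge 4\sum_i x_i$. However, there is a genuine gap that you flag but do not close, and it is not a mere boundary-case nuisance: the inductive inequality you propose is simply false when some neighbor $v$ of $s$ has $D_v\le 1$. Concretely, take $s$ with two simple edges to $r_1$ and $r_2$, with $\deg_{G-s}(r_1)=1$ and $\deg_{G-s}(r_2)=2$. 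Your reduced inequality asks for
\[
a'(1)\,b'(2)\;+\;a'(2)\,b'(1)\;\le\;a'(2)\,b'(2)\,b'(3),
\]
i.e.\ $2+2\sqrt{2}\approx 4.83 \le 3\sqrt{2}\approx 4.24$, which fails. So neither ``retaining the product tightly'' nor your normalization to $\{s\}\cup C_t$ is enough; an extra structural reduction is mandatory.

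The missing idea is a contraction step, which is what the paper uses: if some vertex $v\ne s$ has degree one in $G-s$ with unique neighbor $w$ in $V(G)\setminus\{s\}$, contract the edge $vw$ (retaining parallel edges). Path counts from $s$ to $t$ are preserved (even if $v=t$), and the superadditivity $b'(\deg v)\,b'(\deg w)\ge b'(\deg v+\deg w - 2)$ shows $q'_s$ does not increase. Iterating this together with discarding trivial components forces every neighbor $r_i$ of $s$ to satisfy $n_i=\deg_{G-s}(r_i)\ge 2$, at which point your planned deletion-of-$s$ step plus Cauchy--Schwarz goes through cleanly. Without this reduction, your case analysis of ``pendant neighbor'' or ``$m(sv)=d_v$'' does not capture the failing configuration above (there $r_1$ is not pendant in $G$ and $m(sr_1)\ne d_{r_1}$), so that particular list of structural reductions would not suffice.
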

\begin{proof}
We prove the claim by induction on the number of vertices of $G$.  
As $a'(k)$ and $b'(k)$ are non-decreasing, and $b'(k) \geq 1$ for every $k$, we have $q'_s(G') \leq q'_s(G)$ for every subgraph $G'$ of $G$ such that $s \in V(G')$. Thus we can assume $G$ is connected,
$s\neq t$, and every cutvertex of $G$ is distinct from $s$ and $t$ and separates $s$ from $t$. 
In particular, this implies all vertices distinct from $s$ and $t$ have degree at least two.
  
Suppose that a vertex $v \neq s$ has degree zero in $G - s$. It follows from the previous paragraph that $v=t$ and $V(G)=\{s,t\}$. In this case, letting $l$ be the
multiplicity of the edge between $s$ and $t$, we have $q'_s(G) \ge l^{3/2} \geq l = p_{s,t}(G)$.

Suppose next that a vertex $v \neq s$ has degree one in $G - s$, and  let $w$ 
be the unique neighbor of $v$ in $V(G) \setminus \{s\}$.   Let $G'$ be obtained from $G$ by contracting the edge $vw$ (retaining any multiple edges which arise).
Then $p_{s,t}(G)=p_{s,t}(G')$; note this is the case even if $v=t$ (here, we let $t$ denote the vertex obtained by the contraction).
Observe that $$b'(\deg(v))b'(\deg(w))\ge b'(\deg(v)+\deg(w)-2),$$
and thus we have $q'_s(G) \geq q'_s(G')$. Hence, $p_{s,t}(G)\le q'_s(G)$ follows by applying the induction hypothesis to $G'$.

It remains to consider the case when every vertex $v \neq s$ has degree at least two in $G - s$. Let $r_1$, \ldots, $r_c$ be the neighbors of $s$ in $G$, for $i=1,\ldots,c$ let
$l_i$ denote the multiplicity of the edge $sr_i,$ and let $n_i=\deg(r_i)-l_i$; we have $n_i\ge 2$.
Let $k=\deg(s)=\sum_{i=1}^c l_i$ and $\beta=\prod_{v\in V(G)\setminus\{s,r_1,\ldots, r_c\}} b'(\deg(v))$.
Applying the induction hypothesis to $G-s$, we obtain
\begin{align*}
p_{s,t}(G)&=\sum_{i=1}^c l_ip_{r_i,t}(G-s)\le \sum_{i=1}^cl_iq'_{r_i}(G-s)\\
&=\beta\cdot\Bigl(\sum_{i=1}^c \frac{l_ia'(n_i)}{b'(n_i)}\Bigr)\cdot\prod_{i=1}^c b'(n_i).
\end{align*}
Since $q'_s(G)=a'(k)\beta\cdot \prod_{i=1}^c b'(n_i+l_i)$, it suffices to prove that
$$a'(k)\prod_{i=1}^{c}\frac{b'(n_i+l_i)}{b'(n_i)}\geq \sum_{i=1}^{c}\frac{l_ia'(n_i)}{b'(n_i)}.$$
That is, we need to prove
\begin{equation}\label{eq-main}
\sqrt{\sum_{i=1}^c l_i}\cdot \prod_{i=1}^{c}\Bigl(1+\frac{l_i}{n_i}\Bigr)\geq 2\sum_{i=1}^{c}\frac{l_i}{\sqrt{n_i}}.
\end{equation}
By the Cauchy-Schwarz inequality applied to vectors $(\sqrt{l_i}:i=1,\ldots,c)$ and $(\sqrt{l_i/n_i}:i=1,\ldots,c)$, we have
$$\sum_{i=1}^{c}\frac{l_i}{\sqrt{n_i}}\le \sqrt{\sum_{i=1}^c l_i}\cdot\sqrt{\sum_{i=1}^c \frac{l_i}{n_i}},$$
and thus it suffices to prove that
$$\prod_{i=1}^{c}\Bigl(1+\frac{l_i}{n_i}\Bigr)^2\ge 4\sum_{i=1}^c \frac{l_i}{n_i}.$$
This is the case, since
$$\prod_{i=1}^{c}(1+x_i)^2\ge 4\sum_{i=1}^c x_i$$
holds for all real numbers $x_1,\ldots,x_c\ge 0$.
\end{proof}
Note that $b'(d)\le 3^{d/6}$ for every $d\ge 0$ (with equality for $d=6$). Therefore for a multigraph $G$
with $m$ edges, we have
$$\prod_{v\in V(G)} b'(\deg(v))\le 3^{\frac{1}{6}\sum_{v\in V(G)} \deg(v)}=3^{m/3},$$
and thus $p_{s,t}(G)=O(3^{m/3})$.

\subsection{Introducing Theorem~\ref{thm-paths}}

Our main result is a variation on Theorem~\ref{thm-multi} which gives better bounds, but only applies to simple graphs.
For the rest of the paper, all graphs are simple, i.e.  without loops or parallel edges, and all logarithms are natural

As in the previous section we begin by defining functions that will be used in the proof (we will motivate these definitions after stating Theorem~\ref{thm-paths}). 
Let $b(0)=b(1)=1$.
Let $\rho = (4/3)^{1/4}$, and for every integer $k\ge 2$, let $b(k)=((k-1)!\rho)^{1/(k-1)}$.
Hence, $b(2)=\rho$ and we have
\begin{equation}\label{e:brecursion}
b(k+1)^k = kb(k)^{k-1}
\end{equation}
for every $k \geq 2$.  Note that $b(k)<b'(k)$ for every $k\ge 3$
(we have $b(k)=(k+\tfrac{1}{2}\log k)/e+O(1)$ by Stirling's bound), which is the main source of the improvement over Theorem~\ref{thm-multi}.
Let
$$\lambda(k)=\log \tfrac{b(k+1)}{b(k)}=\frac{1}{k(k-1)}\log\frac{k^k}{\rho k!}$$
for $k\ge 2$.
Let $\alpha(0)=1$, $\alpha(1)=\sqrt{\tfrac{2}{\rho^5}}$, 
$\alpha(2)=\tfrac{2}{\rho^3}$, $\alpha(3)=\tfrac{6}{\rho^4 b(3)b(5)}$ and let 
$$\alpha(k)=\frac{\alpha(k-1)}{1+k(\lambda(k-1)-\lambda(k))}$$ for $k \geq 4$. Finally, let $a(k)=\alpha(k)b(k)$ for every integer $k\ge 0$.
For a graph $G$ and a vertex $s$ of $G$,
we define $$q_s(G)=a(\deg(s))\prod_{v\in V(G)\setminus\{s\}} b(\deg(v)).$$
Given these definitions, we are able to state Theorem~\ref{thm-paths}.
\begin{theorem}\label{thm-paths}
If $G$ is a simple graph, then $p_{s,t}(G)\le q_s(G)$ for all $s,t \in V(G)$.
\end{theorem}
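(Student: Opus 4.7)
The plan is to induct on $|V(G)|$, following the scaffolding of the proof of Theorem~\ref{thm-multi} but with tighter bookkeeping tailored to the sharper functions $a$, $b$, $\alpha$. As a first reduction I would verify the monotonicity $q_s(G') \le q_s(G)$ for every simple subgraph $G' \subseteq G$ containing $s$, using only that $b\ge 1$ and that $b$ and $\alpha$ are non-decreasing. This lets me assume that $G$ is connected, $s \ne t$, and every cutvertex separates $s$ from $t$; in particular every vertex of $V(G)\setminus\{s,t\}$ has degree at least $2$ in $G$, and the base case $V(G)=\{s,t\}$ is immediate from $p_{s,t}(G) \le 1 \le a(1)$.

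Next I would eliminate vertices $v \ne s$ of small degree in $G-s$. A degree-$1$ vertex is forced to be $t$ and can be handled by a direct calculation on tiny graphs. A degree-$2$ vertex $v$ with neighbours $u,w$ in $G-s$ is handled by suppression: replace $G$ by the graph $G'$ obtained by deleting $v$ and inserting the edge $uw$. Each $s$-to-$t$ path in $G$ corresponds to at most one in $G'$, and the defining recursion $b(k+1)^k = k\, b(k)^{k-1}$ is exactly what is needed to verify $q_s(G') \ge p_{s,t}(G)$ when $uw \notin E(G)$. When $uw \in E(G)$, simplicity is lost under suppression, and this case has to be treated by a more delicate ad hoc reduction (for example, by deleting an edge of the triangle $uvw$ and carefully tracking how the relevant degrees change). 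After these reductions every vertex of $V(G)\setminus\{s\}$ has degree at least $3$ in $G-s$.

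In the remaining case, let $r_1,\ldots,r_c$ be the neighbours of $s$ in $G$ and set $n_i = \deg_G(r_i)-1$; since $G$ is simple, $c = k := \deg(s)$. Applying the inductive hypothesis to $G-s$ exactly as in the proof of Theorem~\ref{thm-multi} yields
\[
p_{s,t}(G) \;\le\; \beta \prod_{i=1}^{k} b(n_i) \cdot \sum_{i=1}^{k} \alpha(n_i),
\]
where $\beta = \prod_{v \notin \{s,r_1,\ldots,r_k\}} b(\deg(v))$. Since $q_s(G) = \alpha(k) b(k) \beta \prod_i b(n_i+1)$ and $b(n_i+1)/b(n_i) = e^{\lambda(n_i)}$, the inductive step reduces to the ``main inequality''
\[
\sum_{i=1}^{k} \alpha(n_i) \;\le\; \alpha(k)\, b(k)\, \exp\!\Big(\sum_{i=1}^{k} \lambda(n_i)\Big),
\]
to be verified for all $n_1,\ldots,n_k \ge 3$ (with appropriate strengthenings for small values already guaranteed by the preceding reductions).

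The main obstacle is precisely this analytic inequality, and the intricate definition of $\alpha$ is engineered to make it tight. A first-order analysis shows that the recursion $\alpha(k) = \alpha(k-1)/(1 + k(\lambda(k-1)-\lambda(k)))$ is the Euler--Lagrange condition for stationarity on the diagonal $n_1 = \cdots = n_k$, which is the extremal configuration. I would therefore prove the inequality by a tangent-line/concavity argument: after splitting the indices according to the sign of $\lambda(n_i)-\lambda(k)$ if necessary, bound $\exp(\sum \lambda(n_i))$ from below by a suitable linear function of the $\lambda(n_i)$, thereby reducing the problem to the single-variable estimate $\alpha(n) \le \alpha(k)b(k)\bigl(1 + k(\lambda(n) - \lambda(k))\bigr)$, which the recursion renders essentially an equality at $n = k-1$. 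This, together with direct checks at the ad hoc initial values $\alpha(0),\ldots,\alpha(3)$ and with monotonicity properties of $\lambda$ and $\alpha b$, constitutes the calculation-heavy core of the argument, and I expect essentially all the remaining technical work in the paper to be concentrated here.
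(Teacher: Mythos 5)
Your reduction to the case where every vertex of $V(G)\setminus\{s\}$ has degree at least $3$ in $G-s$ contains a genuine gap, and the place where it fails is precisely where the paper introduces a new structural idea.

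First, the claim that a vertex $v$ with $\deg_{G-s}(v)=1$ is ``forced to be $t$'' is false. If $v$ has degree $2$ in $G$ with neighbours $s$ and $x$, then $v$ has degree $1$ in $G-s$ yet $v$ need not be $t$; a long cycle through $s$ and $t$ already exhibits this, with both neighbours of $s$ on the cycle having degree $1$ in $G-s$. Second, the suppression of a degree-$2$ vertex $v$ in a triangle is not a minor ad hoc case. If $v$ has neighbours $u,w$ with $uw\in E(G)$, the only available contraction gives $G-v$, and paths through $v$ inject into paths of $G-v$ using the edge $uw$; one then needs $2\,q_s(G-v)\le q_s(G)$, but $q_s(G)/q_s(G-v)=b(2)\cdot\tfrac{b(\deg u)}{b(\deg u -1)}\cdot\tfrac{b(\deg w)}{b(\deg w -1)}$ tends to $b(2)=\rho\approx 1.075<2$ when $u,w$ have large degree, so the required inequality fails. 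The same failure occurs when $v$ has degree $2$ in $G$ adjacent to $s$ and $x$ with $sx\in E(G)$. So the reduction you want does not go through, and the remaining main inequality cannot have the simple form you wrote, because you cannot avoid values $n_i\in\{0,1,2\}$.

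The paper sidesteps this by never attempting to kill the degree-$2$ vertices near $s$: instead it grows \emph{funnels}, which are maximal sets $Y\ni s$ such that $G[Y]-s$ is a tree and only $s$ and the root $r$ have neighbours outside $Y$. The forced degree-$2$ structure is absorbed into the tree, and simplicity of $G$ is used only to show that this tree satisfies the hypotheses of Lemma~\ref{lemma-funnelbase}. Removing the interiors of the funnels leaves a graph whose roots automatically satisfy $n_i\ge 2$ (via maximality), and one is led to the more general Lemma~\ref{lemma-main-ineq}, which involves the tree parameters $l_i\ge d_i\ge 1$ and the auxiliary function $\phi(l,d)=\rho^{l-2}b(l-d+2)$; your displayed inequality is exactly the special case $c=k$, $l_i=d_i=1$, which is then Lemma~\ref{l:main}. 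Your heuristic about the analytic core is correct — the recursion for $\alpha$ is indeed engineered to make the extremal diagonal configuration tight, and the paper implements this via the notion of ``well-behaved'' triples and inequalities (\ref{e:t1})--(\ref{e:t3}) — but the combinatorial reduction preceding it is the funnel decomposition, which is the genuinely new idea missing from your proposal.
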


Let now us briefly motivate the values of $b$.
Consider the graph $G$ obtained from a path with ends $s$ and $t$ and with $m$ edges by, for each edge $e$,
adding a vertex adjacent to both ends of $e$.  Then $G$ contains $2^m$ paths from $s$ to $t$, $m+1$ vertices of degree two
distinct from $s$, and $m-1$ vertices of degree $4$.  Hence, we need $(a(2)b(2)/b(4))\cdot (b(2)b(4))^m\ge 2^m$
to hold for every $m$, and thus we need $b(2)b(4)\ge 2$.
Furthermore, for the inductive argument, we need the inequality (\ref{e:main2}),
analogous to the inequality (\ref{eq-main}) from Theorem~\ref{thm-multi}, to hold.
When $c=k$, $l_1=\ldots=l_k=1$ and $n_1=\ldots=n_k=k$ for an integer $k\ge 2$,
this gives $b(k+1)^k\ge kb(k)^{k-1}$. The choice of values of $b$ ensures that these inequalities are tight.

As for the choice of the function $a$, the value of $a(2)$ comes from considering a triangle with distinct vertices $s$ and $t$.
Other values for $a$ again follow from the inequality (\ref{e:main2}).
The value of $a(3)$ corresponds to the case $c=1$, $k=3$, $l_1=3$ and $n_1=2$.
For larger $k$, the value $a(k)$ can actually be chosen by considering the case that
$c=k$, $l_1=\ldots=l_k=1$, $n_1=\ldots=n_{k-1}=k$ and $n_k=k-1$.
However, for the purposes of the proof, it is more convenient to use our choice, obtained
by extrapolating the formulas to real numbers and considering the limit case with $(k-\varepsilon)$ of the values of $n_i$
equal to $k$ and $\varepsilon$ of the values equal to $k-1$, for $\varepsilon\to 0$.

\subsection{Implications of Theorem~\ref{thm-paths}}

In this subsection we will deduce Theorems~\ref{thm:numc},  \ref{thm:4-reg}, \ref{thm:planar} and \ref{thm-dense} from Theorem~\ref{thm-paths}.
As mentioned in the introduction, a bound on $p_{s,t}$ easily translates to a bound on the number of cycles; we will now make this rigorous in our proof of Theorem~\ref{thm:numc}. Let $\gamma=\sup\{b(k)^{2/k}:k\in\mathbb{N}\}=b(5)^{2/5}\approx 1.38403$.
\begin{cor1}[Restated]
A simple graph with $m$ edges has at most $O(\gamma^m)$ cycles.
\end{cor1}
\begin{proof}
Let $G$ be a graph with $m$ edges. Let $\ell=\lfloor m/11\rfloor$.
Note that $G$ has at most
$$\sum_{j=3}^{\ell} \binom{m}{j}  \leq \ell\binom{m}{\ell}\le m\Bigl((em/\ell)^{\ell/m}\Bigr)^m<m1.37^m=o(\gamma^m)$$
cycles of length at most $\ell$.
Let $\mathcal{C}$ be the set of cycles of $G$ of length greater than $\ell$.
For an edge $e\in E(G)$ with ends $x$ and $y$, let $G_e$ be the
graph obtained from $G-e$ by adding vertices $s$ and $t$ adjacent to $x$ and $y$, respectively.
Observe that
$$\sum_{e\in E(G)} p_{s,t}(G_e) \ge \sum_{C\in \mathcal{C}} |E(C)|\ge m|\mathcal{C}|/11.$$
Let $\beta=\prod_{v\in V(G)} b(\deg(v))$.
By Theorem~\ref{thm-paths}, we have $p_{s,t}(G_e)\le q_s(G_e)=\alpha(1)\beta<2\beta$ for every edge $e$.
Hence, $G$ contains less than $22\beta$ cycles of length greater than $\ell$.  Furthermore,
$$\beta=\prod_{v\in V(G)} \Bigl(b(\deg(v))^{2/\deg(v)}\Bigr)^{\deg(v)/2}\le \gamma^{\frac{1}{2}\sum_{v\in V(G)} \deg(v)}=\gamma^m,$$
giving the stated bound.
\end{proof}

More generally, since $\log b(k)$ is concave for $k\ge 2$ (see Lemma~\ref{lemma-concave}),
among graphs of average degree $d$, the product $\prod_{v\in V(G)} b(\deg(v))$ is maximized by
graphs with only vertices of degreees $\lfloor d\rfloor$ and $\lceil d\rceil$. Hence, the same
argument gives the following bound depending on the average degree.
For a real number $d\ge 2$, let $\gamma_d=b(d)^{2/d}$ if $d$ is an integer and
$$\gamma_d=b(\lfloor d\rfloor)^{2(\lceil d\rceil-d)/d}b(\lceil d\rceil)^{2(d-\lfloor d\rfloor)/d}$$ otherwise. Thus we obtain the following corollary of Theorem~\ref{thm-paths}.

\begin{corollary}\label{cor-avgdeg}
A simple graph with $m$ edges, minimum degree $2$ and of average degree $d\ge 2$ has at most $m\gamma_d^m$ cycles.
\end{corollary}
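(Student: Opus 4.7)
The plan is to mimic the proof of Theorem~\ref{thm:numc} above, but to skip the split into short and long cycles --- the factor of $m$ in the claimed bound buys us that freedom, so no separate small-cycle estimate is needed.

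For each edge $e=xy\in E(G)$ I would form $G_e$ exactly as in that proof, by deleting $e$ and attaching new degree-one vertices $s$ and $t$ adjacent respectively to $x$ and $y$. A cycle $C$ containing $e$ extends uniquely to an $s$--$t$ path in $G_e$ (namely $s,x,C-e,y,t$), so, writing $\mathcal{C}$ for the set of all cycles of $G$,
$$\sum_{e\in E(G)} p_{s,t}(G_e)\;\geq\;\sum_{C\in\mathcal{C}}|E(C)|\;\geq\;3|\mathcal{C}|.$$
Since $\deg_{G_e}(s)=\deg_{G_e}(t)=1$ and $b(1)=1$, while every other vertex retains its $G$-degree, Theorem~\ref{thm-paths} gives $p_{s,t}(G_e)\le q_s(G_e)=\alpha(1)\beta<2\beta$ for every $e$, where $\beta:=\prod_{v\in V(G)}b(\deg(v))$. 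Therefore $|\mathcal{C}|\le \tfrac{2}{3}m\beta$.

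It remains to show that $\beta\le\gamma_d^m$. Set $n:=|V(G)|=2m/d$. The minimum-degree-$2$ hypothesis is exactly what lets us invoke Lemma~\ref{lemma-concave}, which states that $k\mapsto\log b(k)$ is concave on integers $k\ge 2$; its piecewise-linear extension $\bar f$ to $[2,\infty)$ is then concave on the reals. Since degrees are integers we have $\log b(\deg(v))=\bar f(\deg(v))$, and Jensen's inequality applied to $\bar f$ yields
$$\sum_{v\in V(G)}\log b(\deg(v))\;=\;\sum_{v\in V(G)}\bar f(\deg(v))\;\le\;n\,\bar f(d).$$
For integer $d$ this is $n\log b(d)$; for non-integer $d$ it equals $n\bigl[(\lceil d\rceil-d)\log b(\lfloor d\rfloor)+(d-\lfloor d\rfloor)\log b(\lceil d\rceil)\bigr]$. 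In either case, exponentiating and substituting $n=2m/d$ converts $n\,\bar f(d)$ into exactly $\log\gamma_d^m$. Combined with $\tfrac{2}{3}<1$ this gives $|\mathcal{C}|\le m\gamma_d^m$.

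Modulo the two cited results there is no real obstacle; the only point needing care will be to verify that the piecewise-linear Jensen bound evaluates to the precise exponents of $b(\lfloor d\rfloor)$ and $b(\lceil d\rceil)$ appearing in the definition of $\gamma_d$, which is a short piece of arithmetic.
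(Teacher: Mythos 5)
Your proof is correct and takes essentially the same approach the paper intends: apply Theorem~\ref{thm-paths} to each auxiliary graph $G_e$ to get $|\mathcal{C}|\le \tfrac{2}{3}m\beta$, then use the concavity of $\log b$ on $\{2,3,\dots\}$ (Lemma~\ref{lemma-concave}) together with Jensen's inequality (which is where the minimum-degree-$2$ hypothesis is genuinely needed, since $\log b$ fails concavity at $k=1$) to bound $\beta=\prod_v b(\deg v)$ by $\gamma_d^m$. Dropping the short/long-cycle split from the proof of Theorem~\ref{thm:numc} is exactly the right simplification given the extra factor of $m$ already present in the claimed bound, and your arithmetic converting $n\bar f(d)=(2m/d)\bar f(d)$ into $m\log\gamma_d$ matches the definition of $\gamma_d$ in both the integer and non-integer cases.
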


For graphs with constant average degree, an argument similar to that in the proof of Theorem~\ref{thm:numc} gives a bound of $O(\gamma_d^m)$. As $\gamma_4 < 1.365$, this immediately implies Theorem~\ref{thm:4-reg}. As the  average degree of every planar graph is less than six and $mb(6)^m=O(2.643^m)$, we also get Theorem~\ref{thm:planar}. 

\begin{remark}
Observe that $\gamma_d>\kappa_1$ only when $4.24<d<7.18$; hence, by Corollary~\ref{cor-avgdeg}, counterexamples to Conjecture~\ref{conj-cyc}
would have to have average degree in this range.
\end{remark}

Analogously to Corollary~\ref{cor-avgdeg}, as $b(k)=((k-1)!\rho)^{1/(k-1)}=\frac{k+O(\log k)}{e}$,
Theorem~\ref{thm-paths} immediately gives Theorem~\ref{thm-dense}. 

\begin{cordeg}[Restated]
A simple graph with $n$ vertices and average degree $d$ has at most
$$\left(\left(1+O\Bigl(\frac{\log d}{d}\Bigl)\right)\cdot \frac{d}{e}\right)^n$$
cycles.
\end{cordeg}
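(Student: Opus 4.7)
The plan is to deduce Theorem~\ref{thm-dense} from Corollary~\ref{cor-avgdeg} combined with the closed-form Stirling estimate $b(k) = (k/e)(1+O(\log k / k))$ that the authors emphasize immediately before the statement; this follows from $b(k)^{k-1} = (k-1)! \rho$ and the standard expansion $\log(k-1)! = (k-1)\log(k-1) - (k-1) + O(\log k)$.

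I would first reduce to the case of minimum degree at least $2$ by iteratively deleting vertices of degree at most $1$, which lie in no cycle. This produces a subgraph $G'$ with $n' \leq n$ vertices, the same number of cycles as $G$, and average degree $d' \geq 2$. A short monotonicity check---verifying that for an absolute constant $C$ chosen large enough the quantity $n \log\!\bigl((d/e)(1+C\log d/d)\bigr)$ is non-increasing under each such deletion---shows that the target bound at $(n,d)$ dominates the target bound at $(n',d')$, reducing the proof to the minimum-degree-$2$ case. Concretely, since a single deletion moves $(n,d)$ to $(n-1, d+\Theta(d/(n-1)))$ (at worst, when an isolated vertex is removed), this monotonicity amounts to the one-variable inequality $g(d) \geq g'(d)\,d$ for $g(d) = \log\!\bigl((d/e)(1+C\log d/d)\bigr)$, which is easily checked to hold uniformly in $d \geq 2$ provided $C$ is taken above an absolute threshold.

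Once minimum degree is at least $2$, Corollary~\ref{cor-avgdeg} gives at most $m\gamma_d^m$ cycles. From the weighted-geometric-mean definition of $\gamma_d$ together with Lemma~\ref{lemma-concave} (concavity of $\log b$ on $[2,\infty)$), Jensen's inequality yields $\gamma_d^m \leq b(d)^n$. Substituting the Stirling estimate $b(d) = (d/e)(1+O(\log d/d))$ produces a bound of $m \cdot \bigl((d/e)(1+O(\log d/d))\bigr)^n$, and the polynomial prefactor $m \leq n^2$ is absorbed into the exponential base (enlarging the implicit constant $C$ if necessary), since for $d \geq 2$ the base is bounded below by some $1+\eta > 1$. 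The main technical obstacle is the uniform monotonicity check in the reduction step; every other ingredient is either taken from earlier in the paper or follows from a standard Stirling calculation, matching the authors' claim that the deduction is immediate.
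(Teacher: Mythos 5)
Your proposal follows the paper's first, briefly sketched deduction from Theorem~\ref{thm-paths}: plug the Stirling estimate $b(k)=\frac{k+O(\log k)}{e}$ into a concavity/Jensen argument à la Corollary~\ref{cor-avgdeg}. That is indeed the route the paper gestures at. However, two points are worth flagging.

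First, the monotonicity reduction to minimum degree $2$ is the weakest link in your argument, and you leave it essentially unverified. Iteratively removing degree-$\le 1$ vertices does not perturb $(n,d)$ incrementally in a controlled way: a graph consisting of a small dense core together with a large forest can pass from $(n,d)$ to $(n',d')$ with $n'\ll n$ and $d'\gg d$, so the comparison is not a local, one-step check but a global one. Your derivative computation with $g(d)\ge g'(d)\,d$ covers only the isolated-vertex step and moreover quietly assumes $d\ge 2$ throughout (for $d<2$ a degree-$1$ deletion \emph{decreases} $d$, and the base $(d/e)(1+C\log d/d)$ is not even bounded below by $1$). The reduction is probably salvageable for $d$ in the asymptotic regime of interest, but it is not a ``short check,'' and a simpler fix exists: since $b(0)=b(1)=1\le b(2)$, one may replace each $\deg(v)$ by $\max(\deg(v),2)$, raising the effective average degree by at most $2$, and then apply Jensen directly to get $\prod_v b(\deg v)\le b(d+2)^n$, with the extra $+2$ absorbed into $O(\log d/d)$. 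This avoids the detour through Corollary~\ref{cor-avgdeg}'s minimum-degree hypothesis entirely.

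Second, the paper in fact gives a complete, self-contained alternative proof of this statement that you do not mention: bound the cycle count by $\perm(A+I)$ and apply Br\`{e}gman's theorem, giving $\prod_v\left(\deg(v)+1\right)!^{1/(\deg(v)+1)}$, then Karamata/Jensen (the function $x\mapsto \tfrac{1}{x}\log x!$ is concave on $[1,\infty)$, so there is no minimum-degree issue) and Stirling. That route sidesteps both $q_s$ and the low-degree complications altogether and is cleaner than what you propose; it is worth knowing that the authors deliberately include it precisely because the ``immediate'' deduction from Theorem~\ref{thm-paths} glosses over the kind of detail that is giving you trouble here.
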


For the interested reader, we now present a short proof of Theorem~\ref{thm-dense} using Br\`{e}gman's Theorem~\cite{Bregman}. Recall that the permanent of $A = (a_{i,j})_{1\leq i,j\leq n}$ is 
\[\perm(A) = \sum_{\sigma\in S_n}\prod_{i=1}^na_{i,\sigma(i)}.\]

\begin{theorem}[Br\`{e}gman's Theorem~\cite{Bregman}]\label{thm:breg}
Let $A$ be an $n \times n$ matrix with all entries in $\{0,1\}$ and row sums $(r_i)_{1 \le i \le n}$. Then $\perm(A) \le \prod_{i=1}^n (r_i!)^{1/r_i}.$
\end{theorem}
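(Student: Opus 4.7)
The plan is to prove Br\`{e}gman's Theorem by the entropy method (this proof is due to Radhakrishnan, and it has the advantage of bypassing the clever AM--GM juggling in Br\`{e}gman's original argument and in Schrijver's proof). Let $\Sigma = \{\sigma \in S_n : a_{i,\sigma(i)} = 1 \text{ for all } i\}$, so that $|\Sigma| = \perm(A)$. Let $\sigma$ be a uniformly random element of $\Sigma$ and let $H$ denote Shannon entropy; then $\log \perm(A) = H(\sigma)$, and the goal is to upper bound $H(\sigma)$ by $\sum_{j=1}^n \tfrac{1}{r_j}\log(r_j!)$.

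First, I would introduce a uniformly random bijection $\tau : [n] \to [n]$, independent of $\sigma$. For any fixed $\tau$ the chain rule gives
\begin{equation*}
H(\sigma) = \sum_{i=1}^n H\bigl(\sigma(\tau(i))\,\bigl|\,\sigma(\tau(1)),\dots,\sigma(\tau(i-1))\bigr).
\end{equation*}
Given the preceding values, $\sigma(\tau(i))$ is supported on the set of columns $c$ with $a_{\tau(i),c} = 1$ that have not yet appeared; writing $N_{\tau(i)}$ for the size of this set (which depends on $\sigma$ and $\tau$), the maximum entropy of a distribution on a finite set of size $N$ is $\log N$, so each summand is at most $\mathbb{E}[\log N_{\tau(i)}\mid \cdot]$. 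Taking expectation over $\sigma$ and then over $\tau$, and reindexing the sum by $j = \tau(i)$, I reduce the problem to showing $\mathbb{E}_\tau[\log N_j] \le \tfrac{1}{r_j}\log(r_j!)$ for every row $j$ and every fixed $\sigma \in \Sigma$.

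The heart of the argument is a symmetry computation for $N_j$. Fix $\sigma$ and let $I_j = \{i \in [n] : a_{j,\sigma(i)} = 1\}$, so that $\sigma(I_j)$ lists the columns in which row $j$ has a $1$; since $\sigma \in \Sigma$ we have $j \in I_j$ and $|I_j| = r_j$. By the definition of $N_j$,
\begin{equation*}
N_j = \#\{i \in I_j : \tau^{-1}(i) \ge \tau^{-1}(j)\},
\end{equation*}
that is, $N_j$ is the rank of $j$ from the top within the set $I_j$ under the random order $\tau^{-1}$. Since $\tau$ is a uniformly random bijection, conditional on the unordered image $\tau^{-1}(I_j)$ the induced order on $I_j$ is uniformly random, so this rank is uniform on $\{1,2,\dots,r_j\}$. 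Consequently
\begin{equation*}
\mathbb{E}_\tau[\log N_j] = \frac{1}{r_j}\sum_{k=1}^{r_j}\log k = \log\bigl((r_j!)^{1/r_j}\bigr).
\end{equation*}

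Combining everything yields $\log\perm(A) = H(\sigma) \le \sum_{j=1}^n \log((r_j!)^{1/r_j})$, and exponentiating gives the claimed inequality. The main obstacle is the symmetry step: one must resist the temptation to condition on too much (which would destroy uniformity of the rank) and instead average jointly over $\sigma$ and $\tau$ so that, for each fixed $\sigma$, the inner expectation over the uniformly random $\tau$ produces exactly the rank-statistic identity above. Once this is set up correctly, the bound is essentially the maximum-entropy bound $H(X)\le \log|\mathrm{supp}(X)|$ applied $n$ times along a random ordering.
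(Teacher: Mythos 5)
Your argument is correct, but note that the paper does not prove this statement at all: Br\`{e}gman's Theorem is quoted there as a known black box (with a citation to Br\`{e}gman's original paper) and is used only as an ingredient in the short proof of Theorem~\ref{thm-dense}. What you have written is a self-contained proof, namely Radhakrishnan's entropy argument, and the key steps all check out: the chain rule along a uniformly random ordering $\tau$ of the rows, the bound $H(X\mid Y)\le \mathbb{E}_Y\log\lvert\mathrm{supp}(X\mid Y)\rvert$ applied with the support contained in the set of not-yet-used columns of row $\tau(i)$ (which is indeed a function of the conditioning variables, so the bound is legitimate), the exchange of the averages over $\sigma$ and $\tau$, and the identification of $N_j$ with the top-rank of $j$ inside $I_j$ under $\tau^{-1}$, which is uniform on $\{1,\dots,r_j\}$ and gives $\mathbb{E}_\tau[\log N_j]=\tfrac{1}{r_j}\log(r_j!)$ exactly. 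Two trivial degenerate cases deserve a sentence: if $\Sigma=\emptyset$ then $\perm(A)=0$ and there is nothing to prove, and if some $r_j=0$ then again $\perm(A)=0$ while the right-hand side is not even well defined, so one should assume $r_j\ge 1$ throughout (as is implicit in the statement). With those caveats your proof is complete and, compared with simply citing the theorem, has the advantage of keeping the paper's derivation of Theorem~\ref{thm-dense} fully self-contained.
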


\begin{proof}[Proof of Theorem~\ref{thm-dense}]
Let $A$ be the adjacency matrix of $G$ and define $A'=A+I$ where $I$ is the $n\times n$ identity matrix. In particular, $\perm(A')$ is bounded below by the number of cycles in $G$. Indeed, for each cycle $C$, consider the permutation $\sigma$ which maps each vertex of $C$ to the vertex which comes immediately after it on $C$ and maps every vertex of $V(G)\setminus V(C)$ to itself. By Br\`{e}gman's Theorem (Theorem~\ref{thm:breg}), convexity (using, e.g.,~Karamata's inequality~\cite{Karamata}) and Stirling's bound (see Lemma~\ref{lemma-stirling}), the number of cycles in $G$ is therefore at most
\[\prod_{v\in V(G)}\left(d(v)+1\right)!^{1/(d(v)+1)}\leq (d+1)!^{n/(d+1)}
\leq\left(\left(1+O\Bigl(\frac{\log d}{d}\Bigl)\right)\cdot \frac{d}{e}\right)^n.\]
\end{proof}

\section{Proof of Theorem~\ref{thm-paths}}\label{sec:main_proof}

In order to prove Theorem~\ref{thm-paths}, we use the following properties of the functions $a$ and $b$.
Since their proofs are rather technical (especially in the case of Lemma~\ref{lemma-main-ineq}), we postpone them for now.

\newcommand{\sconcave}{For $k\ge 2$, the sequence $b(k)$ is increasing, and the sequence $\tfrac{b(k+1)}{b(k)}$ is decreasing.}
\begin{lemma}\label{lemma-concave}
\sconcave
\end{lemma}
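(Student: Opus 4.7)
The plan is to unify both claims into a single inductive argument about the sequence $\lambda(k) = \log(b(k+1)/b(k))$ already introduced in the paper. In this language, $b(k)$ being increasing is $\lambda(k) > 0$, and $b(k+1)/b(k)$ being decreasing is $\lambda(k+1) < \lambda(k)$. Taking logarithms in (\ref{e:brecursion}) and rearranging gives $\log b(k) = \log k - k\lambda(k)$ for every $k \ge 2$, and applying this identity at both $k$ and $k+1$ (while using $\log b(k+1) - \log b(k) = \lambda(k)$) yields the clean two-term recurrence
$$(k+1)\lambda(k+1) \;=\; (k-1)\lambda(k) + \log\bigl(1+\tfrac{1}{k}\bigr) \qquad (k \ge 2).$$
Substituting this into the desired $\lambda(k+1) < \lambda(k)$ and cancelling shows, in one line, that $\lambda$ is strictly decreasing if and only if the strengthened inequality
$$\lambda(k) \;>\; \tfrac{1}{2}\log\bigl(1+\tfrac{1}{k}\bigr) \qquad (\star)$$
holds for every $k \ge 2$. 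Since the right-hand side of $(\star)$ is positive, $(\star)$ also implies $\lambda(k) > 0$, so both parts of the lemma will follow from $(\star)$ alone.

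I would prove $(\star)$ by induction on $k$. The base case $k=2$ uses $b(2) = \rho$ and $b(3) = \sqrt{2\rho}$ to reduce $(\star)$ to $\rho < 4/3$; this is immediate from $\rho = (4/3)^{1/4} > 1$, since then $\rho < \rho^4 = 4/3$. For the inductive step, assuming $(\star)$ at $k$, the recurrence above gives
$$(k+1)\lambda(k+1) \;=\; (k-1)\lambda(k) + \log\bigl(1+\tfrac{1}{k}\bigr) \;>\; \tfrac{k-1}{2}\log\bigl(1+\tfrac{1}{k}\bigr) + \log\bigl(1+\tfrac{1}{k}\bigr) \;=\; \tfrac{k+1}{2}\log\bigl(1+\tfrac{1}{k}\bigr),$$
so $\lambda(k+1) > \tfrac{1}{2}\log(1+\tfrac{1}{k}) > \tfrac{1}{2}\log(1+\tfrac{1}{k+1})$, completing the inductive step.

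The main (and mild) obstacle is spotting the correct strengthening. The naive hypothesis ``$\lambda$ is decreasing'' does not self-propagate under the recurrence, because the coefficient $(k-1)/(k+1)$ on $\lambda(k)$ is strictly less than $1$ while there is a positive additive correction $\log(1+1/k)$. Replacing the hypothesis by $(\star)$ exactly absorbs that correction, making the induction self-carrying, and the peculiar-looking constant $\rho = (4/3)^{1/4}$ is revealed to be precisely the threshold that lets the base case $k=2$ of $(\star)$ squeak through.
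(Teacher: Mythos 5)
Your proof is correct, and it takes a genuinely different route from the paper's. The paper deduces monotonicity of $\lambda$ from Stirling's bound (via Corollary~\ref{cor-lambda}) together with a numerical estimate, whereas you exploit the recursion (\ref{e:brecursion}) directly: taking logarithms gives $\log b(k) = \log k - k\lambda(k)$, which upon shifting $k \mapsto k+1$ and subtracting yields the clean two-term recurrence $(k+1)\lambda(k+1) = (k-1)\lambda(k) + \log(1+\tfrac1k)$ for $k \ge 2$. From this, the statement ``$\lambda$ is decreasing'' is equivalent to your strengthened hypothesis $(\star)$, which is exactly the right invariant: it propagates by a one-line computation and also gives positivity for free. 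This is more elementary and more self-contained than the paper's argument, which imports $\sqrt{2\pi} \le k!e^k/k^{k+1/2} \le e$ and then does a few lines of term juggling; your version requires nothing beyond the defining recursion and a trivial base-case check. One small correction to your closing remark: the base case $k=2$ of $(\star)$ reduces to $\rho < 4/3$, which holds with ample slack since $\rho = (4/3)^{1/4} \approx 1.07$; so $\rho$ is not ``precisely the threshold'' for $(\star)$ --- as the paper explains, $\rho = (4/3)^{1/4}$ is instead the threshold making $b(2)b(4) = 2$ tight, which comes from the lower-bound construction, not from this lemma. That aside, the argument is sound.
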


\newcommand{\sla}{For $k\ge 0$, the sequence $a(k)$ is increasing.}
\begin{lemma}\label{l:a}
\sla
\end{lemma}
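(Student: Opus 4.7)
The plan is to dispatch the four values $k=0,1,2,3$ by direct arithmetic with the explicit formulas for $\alpha$ and $b$: one has $a(0)=1$, $a(1)=\sqrt{2/\rho^5}$, $a(2)=2/\rho^2=\sqrt{3}$, and $a(3)=9/(2b(5))$ with $b(5)=(24\rho)^{1/4}$, and each of the inequalities $a(0)<a(1)<a(2)<a(3)$ then amounts to a short calculation with $\rho=(4/3)^{1/4}$. For $k\ge 4$ the recursive definition of $\alpha$ gives
\[
\frac{a(k)}{a(k-1)} \;=\; \frac{\alpha(k)}{\alpha(k-1)}\cdot\frac{b(k)}{b(k-1)} \;=\; \frac{e^{\lambda(k-1)}}{1+k\bigl(\lambda(k-1)-\lambda(k)\bigr)},
\]
so it suffices to prove the inequality
\begin{equation*}
e^{\lambda(k-1)} \;\ge\; 1 + k\bigl(\lambda(k-1)-\lambda(k)\bigr). \qquad (\star)
\end{equation*}

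My approach to $(\star)$ is to exploit the recurrence $b(k+1)^k=k\,b(k)^{k-1}$, which rewritten as $m\lambda(m)=\log(m/b(m))$ yields the identity
\[
k\lambda(k) \;=\; (k-2)\lambda(k-1) + \log\bigl(1+\tfrac{1}{k-1}\bigr),
\]
whence $k(\lambda(k-1)-\lambda(k))=2\lambda(k-1)-\log(1+\tfrac{1}{k-1})$. Substituting this into $(\star)$ and applying the Taylor lower bound $e^x\ge 1+x+\tfrac{1}{2}x^2$ (valid for $x\ge 0$), the inequality $(\star)$ reduces to
\[
\log\bigl(1+\tfrac{1}{k-1}\bigr)\;\ge\;\lambda(k-1)-\tfrac{1}{2}\lambda(k-1)^2.
\]
To settle this I will prove the auxiliary bound $\lambda(m)\le 1/m$ for every $m\ge 2$. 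Since $\lambda(m)=\tfrac{1}{m}\log(m/b(m))$, this is equivalent to $b(m)\ge m/e$, i.e.\ to $(m-1)!\,\rho\ge (m/e)^{m-1}$. Stirling's inequality (as in Lemma~\ref{lemma-stirling}) further reduces this to $\rho\sqrt{2\pi(m-1)}\ge (1+\tfrac{1}{m-1})^{m-1}$; the right-hand side is bounded above by $e$, while the left-hand side exceeds $\rho\sqrt{4\pi}>e$ for $m\ge 3$, and the case $m=2$ is immediate from $\rho>2/e$. Once $\lambda(k-1)\le 1/(k-1)$ is in hand, the monotonicity of $x\mapsto x-x^2/2$ on $[0,1]$ combined with the standard bound $\log(1+x)\ge x-x^2/2$ completes the reduction.

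The main obstacle is the tightness of $(\star)$: one has $\lambda(k-1)\sim 1/(k-2)$ and $k(\lambda(k-1)-\lambda(k))\sim 1/(k-1)$, and these agree to leading order, differing only at order $1/k^2$. Consequently any proof must both retain the second-order term in the expansion of $e^x$ \emph{and} exploit the exact algebraic relation between $\lambda(k-1)$ and $\lambda(k)$ afforded by the $b$-recurrence. For the same reason the auxiliary bound $\lambda(m)\le 1/m$ is only just true, as $\rho\sqrt{2\pi}\approx 2.69$ sits only slightly below $e$; a genuinely second-order Stirling estimate is required, and the cruder $n!\ge (n/e)^n$ will not suffice.
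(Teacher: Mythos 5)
Your proof is correct, but takes a genuinely different route from the paper's. Both proofs dispatch $k\le 3$ by direct computation, and for $k\ge 4$ both reduce to bounding the ratio $a(k)/a(k-1)=\frac{e^{\lambda(k-1)}}{1+k(\lambda(k-1)-\lambda(k))}$, making essential use of the recursion (\ref{e:brecursion}). From there the methods diverge. The paper bounds the \emph{denominator} from above via $1+x\le e^x$, reducing the claim to $\lambda(k-1)\le\log\bigl(1+\tfrac{1}{k-1}\bigr)$; this is exactly Lemma~\ref{l:b} (that $b(k)/k$ is decreasing), proven separately via Corollary~\ref{cor-lambda}, and reused later in Corollary~\ref{c:ka}. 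You instead lower-bound the \emph{numerator} via $e^x\ge 1+x+x^2/2$, use the same $b$-recursion to rewrite $k(\lambda(k-1)-\lambda(k))=2\lambda(k-1)-\log\bigl(1+\tfrac{1}{k-1}\bigr)$, and reduce to the strictly weaker inequality $\log\bigl(1+\tfrac{1}{k-1}\bigr)\ge\lambda(k-1)-\tfrac{1}{2}\lambda(k-1)^2$, which you settle via a clean self-contained bound $\lambda(m)\le 1/m$ (equivalently $b(m)\ge m/e$) derived from Stirling. Both approaches need the $\sqrt{2\pi k}$ factor of Stirling (confirming your remark that the crude $n!\ge(n/e)^n$ does not suffice), but your closing assertion that one \emph{must} retain the second-order term of $e^x$ is incorrect: the paper gets by with the first-order $1+x\le e^x$, compensating by proving the correspondingly tighter estimate $\lambda(k-1)\le\log(1+\tfrac{1}{k-1})$. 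Your route is more self-contained (it avoids Lemma~\ref{l:b}), while the paper's route factors out a monotonicity lemma of independent use elsewhere in the argument.
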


For integers $l\ge d\ge 1$, let $\phi(l,d)=\rho^{l-2}b(l-d+2)$.

\newcommand{\sfunnelweight}{For all integers $l\ge d\ge 1$, we have $a(l)\phi(l,d)b(d)\ge l$.}
\begin{lemma}\label{lemma-funnelweight}
\sfunnelweight
\end{lemma}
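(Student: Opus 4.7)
The plan is to reduce the range of $d$ to two extremal cases using Lemma~\ref{lemma-concave}, and then prove the resulting inequalities by induction on $l$, with base cases coming from the explicit definitions of $\alpha(0),\ldots,\alpha(3)$.

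First, rewriting the inequality using $a(l)=\alpha(l)b(l)$, it becomes $\alpha(l)b(l)\rho^{l-2}g(d) \ge l$, where $g(d):=b(l-d+2)b(d)$. For $d\in\{2,\ldots,l-1\}$, both arguments $l-d+2$ and $d$ are $\ge 2$, so by Lemma~\ref{lemma-concave} the ratio
\[
\frac{g(d+1)}{g(d)} \;=\; \frac{b(d+1)/b(d)}{b(l-d+2)/b(l-d+1)}
\]
is strictly decreasing in $d$ (numerator decreases, denominator increases) and passes through $1$ at $d=(l+1)/2$. Hence $g$ is unimodal on $\{2,\ldots,l\}$ with its minimum at the endpoints, where $g(2)=g(l)=\rho b(l)$. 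Combined with the boundary value $g(1)=b(l+1)$, the lemma reduces to the two inequalities (the first only needed for $l\ge 2$):
\begin{equation*}
\text{(A)}\ \ \alpha(l)b(l)^2\rho^{l-1} \;\ge\; l, \qquad \text{(B)}\ \ \alpha(l)b(l)b(l+1)\rho^{l-2} \;\ge\; l.
\end{equation*}

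Next, I would prove (A) by induction on $l$. The base cases $l\in\{2,3\}$ follow by direct substitution of the explicit formulas for $\alpha(2)$ and $\alpha(3)$; indeed for $l=2$ the inequality holds with equality, which is consistent with the remark following Theorem~\ref{thm-paths} that $\alpha(2)$ was chosen to make (A) tight. For the inductive step ($l\ge 4$), using the defining recursion $\alpha(l)\bigl(1+l(\lambda(l-1)-\lambda(l))\bigr)=\alpha(l-1)$ together with $b(l)=e^{\lambda(l-1)}b(l-1)$, the left side of (A) factors as
\[
\alpha(l)b(l)^2\rho^{l-1} \;=\; \frac{\rho\, e^{2\lambda(l-1)}}{1+l(\lambda(l-1)-\lambda(l))}\cdot\alpha(l-1)b(l-1)^2\rho^{l-2},
\]
so by the inductive hypothesis, (A) for $l$ reduces to the scalar inequality
\[
\rho(l-1)e^{2\lambda(l-1)} \;\ge\; l\bigl(1+l(\lambda(l-1)-\lambda(l))\bigr).
\]
I would handle (B) analogously, using $b(l+1)=(l/b(l))^{1/l}b(l)$ from (\ref{e:brecursion}) to rewrite the left side and run a parallel induction; alternatively, (B) can be deduced from (A) whenever $b(l+1)/b(l)\ge\rho$ (which, by Lemma~\ref{lemma-concave}, holds for all $l$ up to some threshold) and checked by direct computation for the few small $l$ outside this regime. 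The remaining base case $l=1$ reduces (B) to $\alpha(1)\ge 1$, i.e.\ $\rho^5=(4/3)^{5/4}\le 2$.

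The main obstacle is the scalar inequality displayed above, since $\lambda$ is given by the explicit but unwieldy formula $\lambda(k)=(k(k-1))^{-1}\log(k^k/(\rho k!))$. Via Stirling's approximation (Lemma~\ref{lemma-stirling}), one has $\lambda(k)\sim 1/k$ and $\lambda(k-1)-\lambda(k)\sim 1/k^2$, so both sides grow like $l$ and the estimates must be carried out to the appropriate order of accuracy. Moreover, the recursion for $\alpha$ was designed so that the analogous inequality in the proof of Theorem~\ref{thm-paths} is tight in a certain limit, which suggests the inductive estimate here will have little slack and require careful bookkeeping. This is presumably why the paper defers the proof of Lemma~\ref{lemma-funnelweight} to Section~\ref{sec:post_proof}.
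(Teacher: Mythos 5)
Your approach is a genuinely different route from the paper's. You first locate the minimizing $d$ via the unimodality of $g(d)=b(l-d+2)b(d)$ (using that $b(k+1)/b(k)$ decreases), which reduces the lemma to the two extremal inequalities (A) (for $d\in\{2,l\}$) and (B) (for $d=1$), and you then induct on $l$ within each. The paper never localizes the minimizing $d$: it sets $p(l)=a(l)\min_{1\le d\le l}\phi(l,d)b(d)-l$, verifies $p(l)>7$ for $l=9$ by direct computation, and for $l\ge 10$ shows uniformly in $d$ that $a(l)\phi(l,d)b(d)\ge\rho\,a(l-1)\phi(l-1,d')b(d')$ where $d'=\max(d-1,1)$, yielding the clean recursion $p(l)+l\ge\rho\bigl(p(l-1)+l-1\bigr)$ and avoiding any case analysis on $d$. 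Your route buys a conceptual picture of where the constraint is tight; the paper's buys a much shorter inductive step.

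However, there is a concrete gap in your treatment of (B). You state that $b(l+1)/b(l)\ge\rho$ "holds for all $l$ up to some threshold" and that the cases outside this regime can be ``checked by direct computation for the few small $l$.'' This is backwards: since $b(l+1)/b(l)$ is decreasing and tends to $1<\rho$ (the paper notes $b(14)/b(13)<\rho$), the implication (A)$\Rightarrow$(B) fails precisely for all \emph{large} $l$, not for finitely many small ones, so there is nothing to check by finite computation there. Your fallback, the ``parallel induction'' for (B), reduces (via $b(l)b(l+1)=b(l-1)b(l)e^{\lambda(l-1)+\lambda(l)}$) to $\rho(l-1)e^{\lambda(l-1)+\lambda(l)}\ge l\bigl(1+l(\lambda(l-1)-\lambda(l))\bigr)$, and this is strictly harder than your (A) inequality, because $\lambda(l)<\lambda(l-1)$; you have not verified it. In fact, using the identity $l\lambda(l)-(l-2)\lambda(l-1)=\log\tfrac{l}{l-1}$ (which follows directly from the definition of $\lambda$), your (A) step collapses to $\rho e^x\ge 1+x$ with $x=l(\lambda(l-1)-\lambda(l))\ge 0$ — genuinely easy — but the corresponding (B) step becomes $\rho e^y\ge 1+\tfrac{l}{l-1}y$ with $y=(l-1)(\lambda(l-1)-\lambda(l))$, and the coarse bound $e^y\ge 1+y$ only closes this when $\rho\ge\tfrac{l}{l-1}$, i.e.\ for $l\ge 15$. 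So there remains an unaddressed intermediate range (roughly $l\in\{13,14\}$, together with base cases up to there) where neither the (A)$\Rightarrow$(B) comparison nor the easy inductive estimate applies; this would need separate verification, which your sketch does not supply.
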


\newcommand{\smain}{Let $c$, $l_1, \ldots, l_c$, $d_1, \dots, d_c$ and $n_1,\ldots,n_c$ be integers such that
$l_i\ge d_i\ge 1$ and $n_i\ge 2$ for $1\le i\le c$.
Then
$$a\Bigl(\sum_{i=1}^c l_i\Bigr)\prod_{i=1}^c \phi(l_i,d_i)b(n_i+d_i)\ge \Bigl(\sum_{i=1}^c l_i\alpha(n_i)\Bigr)\cdot\prod_{i=1}^c b(n_i).$$}
\begin{lemma}\label{lemma-main-ineq}
\smain
\end{lemma}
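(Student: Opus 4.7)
I would attempt the proof by induction on $c$, the number of terms in the product. The base case $c=1$ asks for $a(l)\phi(l,d)b(n+d) \ge l\alpha(n) b(n)$. Lemma~\ref{lemma-funnelweight} gives the weaker bound $a(l)\phi(l,d)b(d) \ge l$, so it remains to control the ratio $b(n+d)/(b(n)b(d))$ relative to $\alpha(n)$. A quick check (e.g.\ $n=2$, $d=1$) shows $b(n+d) \ge a(n)b(d)$ \emph{fails}, so Lemma~\ref{lemma-funnelweight} alone is insufficient; one must also exploit the concavity of $\log b$ (Lemma~\ref{lemma-concave}) and monotonicity of $a$ (Lemma~\ref{l:a}) to bound the combined factor $\phi(l,d)b(n+d)/b(d)$ against $l\alpha(n)b(n)/a(l)$, using the explicit recursion $b(k+1)^k = kb(k)^{k-1}$ for quantitative estimates.

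For the inductive step, my first attempt would follow the template of the multigraph proof of Theorem~\ref{thm-multi}: apply the Cauchy--Schwarz inequality in the form
\[
\sum_{i=1}^c l_i\alpha(n_i) \;\le\; \sqrt{\sum_{i=1}^c l_i}\;\cdot\;\sqrt{\sum_{i=1}^c l_i\alpha(n_i)^2}
\]
with $k=\sum l_i$, so that it suffices to prove
\[
a(k)^2\prod_{i=1}^c\frac{\phi(l_i,d_i)^2\,b(n_i+d_i)^2}{b(n_i)^2} \;\ge\; k\sum_{i=1}^c l_i\alpha(n_i)^2.
\]
One then tries to bound each summand $l_i\alpha(n_i)^2$ by the corresponding ``local'' product factor times a pro-rata share of $a(k)^2/k$, which is where the machinery of $\phi$ and $\alpha$ is designed to cooperate. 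If Cauchy--Schwarz loses too much, the backup plan is to reduce to the tight configuration $l_i=d_i=1$, $n_i=k$ for all $i$, in which the inequality collapses to the recursion $b(k+1)^k = kb(k)^{k-1}$ and hence holds with equality; the defining recursion $\alpha(k) = \alpha(k-1)/(1+k(\lambda(k-1)-\lambda(k)))$ is engineered precisely so that perturbing a single $n_i$ by $\pm 1$ preserves the inequality (this was already hinted at in the paper's motivation for $\alpha$).

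The main obstacle will be the function $\phi(l,d)=\rho^{l-2}b(l-d+2)$, which interleaves $l$ and $d$ asymmetrically. As a function of $d$ (with $l,n$ fixed), the product $\phi(l,d)b(n+d)$ is not monotone---by log-concavity of $b$ it is minimized at an \emph{interior} value $d \approx (l-n+1)/2$---so one cannot just reduce to $d_i=1$ or $d_i=l_i$ by a one-sided comparison. Handling this likely requires a careful two-sided argument: in one regime (small $d_i$) reduce to $d_i=1$ and invoke the recursion $b(k+1)^k = kb(k)^{k-1}$ directly, and in the other regime (large $d_i$) compare against Lemma~\ref{lemma-funnelweight} applied with the appropriate value of $d$. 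I expect the bulk of the technical work to consist of setting up the right splittings and interpolations so that these two regimes meet cleanly, and in verifying the many resulting one-variable inequalities using the explicit expression $b(k)=((k-1)!\rho)^{1/(k-1)}$.
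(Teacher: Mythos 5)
Your analysis contains a consequential error that blocks the intended reduction. You claim that, by log-concavity of $b$, the product $\phi(l,d)b(n+d)=\rho^{l-2}b(l-d+2)b(n+d)$ is minimized at an interior value of $d$. This has the direction of concavity backwards: since the two arguments $l-d+2$ and $n+d$ have constant sum $l+n+2$, concavity of $\log b$ makes the product $b(l-d+2)b(n+d)$ \emph{maximized} when the arguments are equal and \emph{minimized} at the boundary of the admissible range $1\le d\le l$. Comparing the two endpoints with Lemma~\ref{lemma-funnelbase-ineq} (applied as $b(l+1)b(n+1)\ge b(2)b(n+l)$) shows the minimum is at $d=l$, and more generally
$$\phi(l,d)b(n+d)=\rho^{l-2}b(l-d+2)b(n+d)\ge\rho^{l-2}b(2)b(n+l)=\rho^{l-1}b(n+l).$$
This one-line estimate cleanly eliminates the $d_i$'s and reduces the lemma to the case $d_i=l_i$, which is precisely how the paper proceeds (Lemma~\ref{l:main}). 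Your belief in an interior minimum leads you to plan a ``two-sided'' regime-splitting that is unnecessary, and more importantly you would not have a valid one-sided comparison at all if the minimum really were interior.

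The second issue is the Cauchy--Schwarz template. That step in Theorem~\ref{thm-multi} depends crucially on $a'(k)=2\sqrt k$: the square-root form is what makes the two vectors in Cauchy--Schwarz line up. The function $a(k)=\alpha(k)b(k)$ has no such structure, and there is no evident analogue of the final elementary inequality $\prod(1+x_i)^2\ge 4\sum x_i$. The paper instead proves the reduced inequality (\ref{e:main2}) by induction on $k=\sum l_i$ (not on $c$), via the notion of a ``well-behaved'' triple $(k,n,l)$ and the estimate $e^x\ge 1+x$ applied to $k\alpha(k)\exp\bigl(\tfrac{1}{k\alpha(k)}\sum_i f(k,n_i,l_i)\bigr)$; the reduction steps (cases where (\ref{e:t1})--(\ref{e:t3}) fail) correspond to splitting, deleting, or shrinking a single $(n_i,l_i)$ pair, and the finitely many remaining triples are checked by computation (Corollary~\ref{c:well}, Lemma~\ref{l:cases}). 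You do correctly intuit that the defining recursion for $\alpha$ is engineered so that the configuration $l_i=1$, $n_i\approx k$ is tight, but the route from that observation to the general inequality is the well-behaved-triple induction, not Cauchy--Schwarz.
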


A triple $(G,s,t)$ is a \emph{counterexample} to Theorem~\ref{thm-paths} if $G$ is a graph with specified vertices $s$ and $t$
satisfying the assumptions of the theorem but containing more than $q_s(G)$ paths from $s$ to $t$.
Since $a(k),b(k)\ge 1$ for every $k$ and $p_{s,s}(G)=1$, a counterexample satisfies $s\neq t$.
A counterexample $(G,s,t)$ is \emph{smallest} if $G$ has the smallest number of vertices among all counterexamples.

For a set $Z\subseteq V(G)$, we say $G$ is \emph{$(Z,2)$-connected} if $G$ is connected
and for every $v\in V(G)$, every component of $G-v$ intersects $Z$.

\begin{lemma}\label{lemma-2conn}
Every smallest counterexample $(G,s,t)$ to Theorem~\ref{thm-paths} is $(\{s,t\},2)$-connected.
\end{lemma}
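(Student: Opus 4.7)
The plan is to argue by contradiction: suppose $(G,s,t)$ is a smallest counterexample that is not $(\{s,t\},2)$-connected. Then either $G$ is disconnected, or there is a vertex $v$ and a component $C$ of $G-v$ with $V(C)\cap\{s,t\}=\emptyset$. In either case I will produce a strictly smaller counterexample by deleting the ``useless'' part of the graph, contradicting minimality. Throughout I will use that $a(k) \ge 1$ and $b(k) \ge 1$ for all $k\ge 0$ (immediate from $a(0)=b(0)=b(1)=1$, $b(2)=\rho>1$, and the monotonicity lemmas).

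First I would handle the disconnected case. If $s$ and $t$ lie in different components, then $p_{s,t}(G)=0\le q_s(G)$, so $(G,s,t)$ is not a counterexample; hence $s$ and $t$ belong to a common component $G'$, and $p_{s,t}(G')=p_{s,t}(G)$. The degrees of vertices in $V(G')$ are unchanged, while each vertex $w\in V(G)\setminus V(G')$ contributes a factor $b(\deg_G(w))\ge 1$ to $q_s(G)$. Therefore $q_s(G')\le q_s(G)$, so $(G',s,t)$ is a strictly smaller counterexample.

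For the remaining case, let $G'=G-V(C)$; since $C$ avoids $\{s,t\}$, both $s$ and $t$ lie in $G'$. Any simple $s$--$t$ path in $G$ visits $v$ at most once, so it cannot enter $V(C)$ and return to the rest of $G$; hence $p_{s,t}(G')=p_{s,t}(G)$. The degrees of all vertices in $V(G')\setminus\{v\}$ are unchanged between $G$ and $G'$, while $\deg_{G'}(v)\le \deg_G(v)$, and every vertex of $C$ contributes a factor of at least $1$ to $q_s(G)$. Hence $q_s(G')\le q_s(G)$, and once again $(G',s,t)$ is a strictly smaller counterexample, completing the contradiction.

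The only real subtlety, and the step I expect to need the most care, is verifying that the factor attached to $v$ does not increase when its degree drops. For $v\ne s$ one uses that $b$ is non-decreasing on $\mathbb{Z}_{\ge 0}$, which requires combining Lemma~\ref{lemma-concave} (monotonicity for $k\ge 2$) with the boundary values $b(0)=b(1)=1\le \rho =b(2)$; for $v=s$ one must use the monotonicity of $a$ from Lemma~\ref{l:a} instead. Once these two monotonicity facts are in place, the reduction in both cases is routine.
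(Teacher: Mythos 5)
Your proof is correct and follows essentially the same approach as the paper: identify a vertex $v$ together with a nonempty piece of $G-v$ disjoint from $\{s,t\}$, delete that piece, and observe that $p_{s,t}$ is unchanged while $q_s$ does not increase, using $b\geq 1$ and the monotonicity of $a$ and $b$. The paper states this a bit more tersely (phrasing the deleted piece as a set $K$ with no neighbours outside $K\cup\{v\}$) and folds the disconnected case into "Clearly, $G$ is connected," but the argument is the same.
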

\begin{proof}
Clearly, $G$ is connected.  Suppose for a contradiction that there exists a vertex $v$ and a non-empty
set $K\subseteq V(G)\setminus\{w,s,t\}$ such that no vertex of $K$ has a neighbor outside of $K\cup\{v\}$. Since both $a$ and $b$ are non-decreasing by Lemmas~\ref{lemma-concave} and \ref{l:a},
and since $b(k)\ge 1$ for every $k\ge 0$, we have
$q_s(G-K)\le q_s(G)$, while $p_{s,t}(G) = p_{s,t}(G-K)$,  thus contradicting the assumption that $(G,s,t)$ is a counterexample.
\end{proof}

Let us give a quick corollary of Lemma~\ref{lemma-concave}.

\begin{lemma}\label{lemma-funnelbase-ineq}
For all integers $n\ge m\ge 2$, we have
$$b(m)b(n-m+2)\ge b(2)b(n).$$
\end{lemma}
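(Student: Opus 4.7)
The plan is to reduce the inequality to a statement about the concavity of $\log b$, which is exactly what Lemma~\ref{lemma-concave} provides (the statement that $b(k+1)/b(k)$ is decreasing in $k\ge 2$ is precisely the concavity of $\log b$ on integers $\ge 2$, via the function $\lambda(k)=\log(b(k+1)/b(k))$).

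Taking logarithms, the desired inequality is equivalent to
$$\log b(m)-\log b(2)\ge \log b(n)-\log b(n-m+2).$$
Both sides telescope into sums of the increments $\lambda(k)=\log(b(k+1)/b(k))$: the left-hand side equals $\sum_{k=2}^{m-1}\lambda(k)$ and the right-hand side equals $\sum_{k=n-m+2}^{n-1}\lambda(k)$. Both sums contain exactly $m-2$ terms (so the case $m=2$ is immediate), and since $n\ge m$, each index on the right is at least as large as the corresponding index on the left under the natural pairing $k\leftrightarrow k+(n-m)$.

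To finish, I would invoke Lemma~\ref{lemma-concave}: because $b(k+1)/b(k)$ is decreasing for $k\ge 2$, the sequence $\lambda(k)$ is decreasing for $k\ge 2$. Since $n-m\ge 0$, this yields $\lambda(k)\ge \lambda(k+n-m)$ for every $k\in\{2,\dots,m-1\}$, and summing over these values of $k$ gives the required bound. (Equivalently, one can phrase this as Karamata's inequality applied to the concave function $\log b$ together with the observation that the sorted pair $(n,2)$ majorizes the sorted version of $(m,n-m+2)$, since both have sum $n+2$ and $\max(m,n-m+2)\le n$.)

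There is no real obstacle; the only point requiring a moment of attention is verifying that all indices appearing in the telescoping sums lie in the range $k\ge 2$ where Lemma~\ref{lemma-concave} applies. This holds because $n-m+2\ge 2$ follows from $n\ge m$, so the smallest index on the right is at least $2$, and the largest index on the left, $m-1$, is of course at most $n-1$.
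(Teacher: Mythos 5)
Your proposal is correct and is essentially the same argument as the paper's: the paper multiplies the ratio inequalities $b(l+1)/b(l)\le b(k+1)/b(k)$ along a telescoping product, while you take logarithms and sum the equivalent $\lambda$-inequalities term by term, pairing $k$ with $k+(n-m)$. The two are identical up to exponentiation, and both rely on Lemma~\ref{lemma-concave} in exactly the same way.
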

\begin{proof}
Since the sequence $\frac{b(k+1)}{b(k)}$ is decreasing for $k\ge 2$ by Lemma~\ref{lemma-concave},
we have $\tfrac{b(l+1)}{b(l)}\le \tfrac{b(k+1)}{b(k)}$
for every $l\ge k\ge 2$.  Taking the product of these inequalities with $l$ replaced by $l, l+1, \ldots, l+t-1$
and $k$ replaced by $k, \ldots, k+t-1$ in turn, we have $\tfrac{b(l+t)}{b(l)}\le \tfrac{b(k+t)}{b(k)}$
for every $t\ge 0$.  The claim of the lemma follows by setting $k=2$, $l=m$ and $t=n-m$.
\end{proof}

Let us now derive a bound on $q_r(T)/a(r)$ where $T$ is a special kind of a tree with root $r$.

\begin{lemma}\label{lemma-funnelbase}
Let $T$ be a tree with root $r$ of degree $d\ge 1$ and with $l$ leaves distinct from the root.
Suppose that no vertex of $T$ is adjacent to more than one leaf of $T$
distinct from $r$.  Let $S$ be the set of non-leaf vertices of $T$ distinct from the root.
Then $$\prod_{v\in S} b(\deg(v))\ge \phi(l,d).$$
\end{lemma}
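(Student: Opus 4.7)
The plan is to proceed by induction on $|V(T)|$. The base case is $|V(T)|=2$: then $T$ is the single edge $rr_1$, so $d=l=1$, $S=\emptyset$, and the inequality reduces to $1\ge \rho^{-1}b(2)=\phi(1,1)=1$.

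For the inductive step, let $r_1,\ldots,r_d$ be the neighbors of $r$ and let $T_i$ be the component of $T-r$ containing $r_i$, rooted at $r_i$. The hypothesis on $T$ forces at most one $r_i$ to be a leaf of $T$, so letting $I:=\{i:r_i\text{ is not a leaf of }T\}$ we have $|I|\in\{d-1,d\}$. For $i\in I$ set $d_i:=\deg_{T_i}(r_i)=\deg_T(r_i)-1\ge 1$ and let $l_i$ be the number of leaves of $T_i$ distinct from $r_i$. A short check confirms that $T_i$ inherits the lemma's hypothesis (since $\deg_{T_i}(v)=\deg_T(v)$ for every $v\ne r_i$) and that $l_i\ge d_i$, so the induction hypothesis applies to each $T_i$. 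Using the decomposition $S=\{r_i:i\in I\}\cup\bigcup_{i\in I}S_i$ together with $\deg_T(v)=\deg_{T_i}(v)$ for $v\in S_i$, I obtain
\[\prod_{v\in S}b(\deg_T v)\;\ge\;\prod_{i\in I}b(d_i+1)\,\phi(l_i,d_i)\;=\;\rho^{\sum_{i\in I}(l_i-2)}\prod_{i\in I}b(d_i+1)\,b(l_i-d_i+2).\]

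From here I would finish by two uses of Lemma~\ref{lemma-funnelbase-ineq}. First, applied to each factor, it yields $b(d_i+1)\,b(l_i-d_i+2)\ge \rho\cdot b(l_i+1)$, which is valid because $l_i+1\ge d_i+1\ge 2$. Second, iterating the lemma across $i\in I$ collapses $\prod_{i\in I}b(l_i+1)$ into $\rho^{|I|-1}\,b\!\left(\sum_{i\in I}l_i-|I|+2\right)$. Combining the $\rho$-exponents and substituting $\sum_{i\in I}l_i=l-(d-|I|)$, the total bound becomes $\rho^{l-d+|I|-1}\,b(l-d+2)$, which is at least $\rho^{l-2}\,b(l-d+2)=\phi(l,d)$ since $|I|\ge d-1$.

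The only substantive obstacle is bookkeeping: checking that every application of Lemma~\ref{lemma-funnelbase-ineq} meets its $n\ge m\ge 2$ requirement, that the intermediate arguments in the iteration stay $\ge 2$ (which follows since each $l_i+1\ge 2$), and that the leaf-count identity $\sum_{i\in I}l_i=l-(d-|I|)$ correctly absorbs the (at most one) leaf-neighbor of $r$. The tight case $|I|=d-1$ corresponds exactly to $r$ having a leaf-neighbor; this is the situation in which one cannot afford to lose any of the $\rho$-slack that is freely available when $|I|=d$.
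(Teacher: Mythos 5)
Your proof is correct, and the inductive decomposition is genuinely different from the paper's. You split $T$ at the root into the subtrees $T_i$ hanging from the non-leaf neighbours of $r$, apply the induction hypothesis to each $T_i$, and then invoke Lemma~\ref{lemma-funnelbase-ineq} repeatedly: once per $i\in I$ to turn $b(d_i+1)\,b(l_i-d_i+2)$ into $\rho\,b(l_i+1)$, and then iteratively to collapse $\prod_{i\in I}b(l_i+1)$ into $\rho^{|I|-1}b\bigl(\sum_{i\in I}l_i-|I|+2\bigr)$, after which the leaf-count identity $\sum_{i\in I}l_i=l-d+|I|$ and $|I|\ge d-1$ finish the job. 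The paper instead prunes $T$ from the leaves: it first suppresses degree-two vertices of $S$ not adjacent to a leaf, then locates a deepest vertex $w\in S$ of degree $m\ge 3$, deletes $w$'s degree-two children together with their leaves, and applies the induction hypothesis to the smaller tree, using Lemma~\ref{lemma-funnelbase-ineq} a single time to relate $\phi(l,d)$ to $\phi(l-m+2,d)$. Both arguments are driven by the same supermultiplicativity inequality for $b$; your root-splitting is structurally cleaner and avoids the suppression normalisation, at the price of the extra $\rho$-exponent and leaf-count bookkeeping you rightly flag (with the tight case $|I|=d-1$), whereas the paper's leaf-pruning keeps each inductive step down to one application of the key inequality.
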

\begin{proof}
We prove the claim by induction on the number of vertices of $T$.
Without loss of generality, we can assume that all vertices in $S$ of degree two are adjacent to leaves distinct from $r$,
as otherwise they can be suppressed.
If all vertices in $S$ have degree two, then $l=d$ and $|S|\ge l-1$ by the assumption that $r$ is adjacent to at most one leaf,
and thus the inequality holds.

Otherwise, let $w$ be a vertex of $S$ of degree $m\ge 3$ that is farthest from $r$ and let $D$ denote the set of vertices of $S$ separated
from $r$ by $w$; all vertices of $D$ are adjacent to $w$ and have degree two.
If $|D|=m-1$, then a vertex of $D$ can be suppressed; hence, we can assume that $|D|=m-2$ and $w$ is incident with a leaf.

Note that the condition that no vertex is adjacent to more than one leaf of $T_Y$ distinct from $r$ follows from the fact that every leaf of $G[Y]-s$ is adjacent to $s$ (by Lemma~\ref{lemma-2conn}) and the fact that $G$ does not have parallel edges (and this is indeed the only place in the proof where we take advantage of this assumption). we have $l\ge d+m-2$, and thus $l-d+2\ge m>2$.  Hence,
$b(m)b(l-m-d+4)\ge b(2)b(l-d+2)$ by Lemma~\ref{lemma-funnelbase-ineq}, and thus
$$\frac{\phi(l,d)}{\phi(l-m+2,d)}=b(2)^{m-2}\frac{b(l-d+2)}{b(l-m-d+4)}\le b(m)b(2)^{m-3}.$$
Let $T'$ be the tree obtained from $T$ by deleting $D$ and the leaves adjacent to $D$ (so $w$ is a vertex
of degree two in $T'$).  By the induction hypothesis applied to $T'$, we have
$$b(2)\prod_{v\in S\setminus (D\cup \{w\})} b(\deg(v))\ge \phi(l-m+2,d).$$
Therefore,
\begin{align*}
\prod_{v\in S} b(\deg(v))&=b(m)b(2)^{m-2}\prod_{v\in S\setminus (D\cup\{w\})} b(\deg(v))\\
&\ge b(m)b(2)^{m-3}\phi(l-m+2,d)\ge \phi(l,d).
\end{align*}
\end{proof}

Let $G$ be a graph with distinct vertices $s$ and $t$.  For $r\in V(G)\setminus \{s\}$, a \emph{funnel with root $r$} in $G$
is a set of vertices $Y\subseteq V(G)$ such that $s,r\in Y$, $t\not\in Y\setminus\{r\}$, no vertex of $Y$ other than $s$ and $r$
has a neighbor outside of $Y$, and $G[Y]-s$ is a tree.  Clearly, a funnel contains at most $\deg_{G[Y]}(s)$ paths from $s$ to $r$.
Let $b(Y)=\prod_{v\in Y\setminus\{s,r\}} b(\deg(v))$.
We now apply Lemma~\ref{lemma-funnelbase} to the tree $T_Y$ obtained from $G[Y]$ by splitting $s$
into $\deg_{G[Y]}(s)$ leaves. Note that that condition that no vertex is adjacent to more than one leaf of $T_Y$ distinct from $r$
follows from the assumption that $G$ does not have parallel
edges (and this is indeed the only place in the proof where we take advantage of this assumption).

\begin{corollary}\label{cor-funnelbase}
Let $G$ be a graph with distinct vertices $s$ and $t$.
If $Y$ is a funnel with root $r$, then $b(Y)\ge \phi(\deg_{G[Y]}(s),\deg_{G[Y]}(r))$.
\end{corollary}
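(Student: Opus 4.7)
The plan is to construct a tree $T_Y$ from the funnel $Y$ and apply Lemma~\ref{lemma-funnelbase} directly to it. Set $d=\deg_{G[Y]}(s)$ and let $v_1,\ldots,v_d$ be the neighbors of $s$ in $G[Y]$. Form $T_Y$ from $G[Y]$ by deleting $s$ and adding $d$ fresh pendant vertices $s_1,\ldots,s_d$, with $s_i$ attached only to $v_i$. Since $G[Y]-s$ is a tree and gluing on $d$ new leaves preserves this, $T_Y$ is again a tree, which I root at $r$. The construction preserves every other degree: $\deg_{T_Y}(r)=\deg_{G[Y]}(r)$, and for every $v\in Y\setminus\{s,r\}$ we have $\deg_{T_Y}(v)=\deg_{G[Y]}(v)=\deg_G(v)$, the last equality because the funnel conditions prevent $v$ from having neighbors outside of $Y$.

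The crux is to verify the hypothesis of Lemma~\ref{lemma-funnelbase}: no vertex of $T_Y$ is adjacent to more than one leaf distinct from $r$. Since $G$ is simple, the $v_i$ are pairwise distinct, so no vertex of $T_Y$ is adjacent to two of $s_1,\ldots,s_d$. Any additional non-root leaf of $T_Y$ would have to be a vertex $u\in Y\setminus\{s,r\}$ of degree one in $G[Y]$; the funnel conditions then give $\deg_G(u)=1$, and in the context in which the corollary is applied (a smallest counterexample, where Lemma~\ref{lemma-2conn} forces $(\{s,t\},2)$-connectivity) such a $u$ cannot exist, because deleting its unique neighbor $w$ would leave $\{u\}$ as a component of $G-w$ disjoint from $\{s,t\}$.

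With the hypothesis established, let $l$ be the number of leaves of $T_Y$ distinct from $r$. Lemma~\ref{lemma-funnelbase} then yields $\prod_{v\in S}b(\deg_{T_Y}(v))\ge\phi(l,\deg_{G[Y]}(r))$, where $S$ is the set of non-leaf vertices of $T_Y$ other than $r$. Every $v\in Y\setminus\{s,r\}$ either lies in $S$ or is a leaf of $T_Y$; the latter contribute the trivial factor $b(1)=1$ to $b(Y)$, so $b(Y)=\prod_{v\in S}b(\deg_{T_Y}(v))$. Finally the $s_i$ are non-root leaves, so $l\ge d=\deg_{G[Y]}(s)$; since $\phi(\cdot,\deg_{G[Y]}(r))$ is non-decreasing (as $\rho>1$ and $b$ is non-decreasing by Lemma~\ref{lemma-concave}), we conclude $b(Y)\ge\phi(\deg_{G[Y]}(s),\deg_{G[Y]}(r))$. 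I expect verifying the hypothesis of Lemma~\ref{lemma-funnelbase} to be the most delicate step, since simplicity alone handles the newly added leaves but a slightly stronger structural assumption is needed to rule out pre-existing degree-one vertices in $Y\setminus\{s,r\}$.
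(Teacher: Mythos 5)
Your proof takes the same route as the paper: form $T_Y$ by splitting $s$ into $\deg_{G[Y]}(s)$ pendant leaves, check the hypothesis of Lemma~\ref{lemma-funnelbase} (simplicity of $G$ rules out a common neighbor of two pendants, and Lemma~\ref{lemma-2conn} rules out pre-existing degree-one vertices in $Y\setminus\{s,r\}$), and apply the lemma. You also correctly flag that the corollary is only invoked on a smallest counterexample where $(\{s,t\},2)$-connectivity holds, which is exactly what the paper's own (somewhat terse and misplaced) remark relies on; the only cosmetic difference is your appeal to monotonicity of $\phi$, which is unnecessary since under these hypotheses the number of non-root leaves of $T_Y$ equals $\deg_{G[Y]}(s)$ exactly.
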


We are now ready to prove the main result.

\begin{proof}[Proof of Theorem~\ref{thm-paths}]
Suppose for a contradiction that $(G,s,t)$ is a smallest counterexample to Theorem~\ref{thm-paths}.
Let $Y_1$, \ldots, $Y_c$ be all distinct maximal funnels in $G$.  We additionally choose the counterexample
so that $|E(G[Y_1])|+\ldots+|E(G[Y_c])|$ is maximized among all smallest counterexamples.

Suppose for a contradiction that $Y_i \cap Y_j \neq \{s\}$ for some $1 \leq i < j \leq c$. Let $r_i$ and $r_j$ be the roots of $Y_i$ and $Y_j$, and choose $v \in (Y_i \cap Y_j) \setminus s$ so that the length of the unique path $P$ in $G[Y_i]$ with ends $v$ and $r_i$ in $G[Y_i]$ is minimum. Then $v \in \{r_i,r_j\}$, as otherwise
the neighbor of $v$ in $V(P)$ does not lie in $Y_j$, contradicting the definition of a funnel. Thus we may assume by symmetry that $r_j \in V_i$. It is now easy to verify that $Y_i \cup Y_j$ is a funnel with root $r_i$, in contradiction to our choice of  $Y_1, \ldots, Y_c$. 

It follows that any two of the subgraphs $G[Y_1]$, \ldots, $G[Y_c]$
intersect exactly in $s$. Moreover,  each edge of $G$ incident with $s$ belongs to exactly one of these subgraphs.  

Let $r_1$, \ldots, $r_c$ be the roots of the funnels; clearly the roots are pairwise distinct.  
For $i\in\{1,\ldots,c\}$, let $l_i=\deg_{G[Y_i]}(s)$, $d_i=\deg_{G[Y_i]}(r_i)$, and $n_i=\deg_G(r_i)-d_i$.
By symmetry, we can assume that $r_2$, \ldots, $r_c$ are distinct from $t$, and by the maximality of the funnels,
we conclude that $n_2,\ldots, n_c\ge 2$.

Let us first consider the case that $n_1\le 1$, and thus by the maximality of $Y_1$ we have $r_1=t$.
If $n_1=0$, then by Lemma~\ref{lemma-2conn} we have $c=1$.  Consequently,
$p_{s,t}(G)=l_1\le a(l_1)\phi(l_1,d_1)b(d_1)\le a(l_1)b(Y_1)b(d_1)=q_s(G)$
by Lemma~\ref{lemma-funnelweight} and Corollary~\ref{cor-funnelbase}. Hence, we can assume $n_1\ge 1$.

Suppose that $n_1=1$, and let $x$ be the neighbor of $t$ not in $Y_1$.  Note that $tx$ is a bridge in $G-s$, and thus
$p_{s,x}(G)=p_{s,t}(G)$.  Furthermore, $Y_1\cup\{x\}$, $Y_2$, \ldots, $Y_c$ are funnels in $G$ with specified vertices $s$ and $x$,
and by the choice of $(G,s,t)$, we conclude that $(G,s,x)$ is not a counterexample.  Hence, we have $p_{s,x}(G)\le q_s(G)$.
However, this contradicts the assumption that $(G,s,t)$ is a counterexample.

Therefore, we can assume $n_1\ge 2$.
Let $G'=G-((Y_1\cup \ldots\cup Y_c)\setminus \{r_1,\ldots,r_c\})$ and let $\beta=\prod_{v\in V(G')\setminus \{r_1,\ldots,r_c\}}b(\deg(v))$.
Since $G$ is a smallest counterexample and using Lemma~\ref{lemma-main-ineq} and Corollary~\ref{cor-funnelbase}, we have
\begin{align*}
p_{s,t}(G)&=\sum_{i=1}^c l_ip_{r_i,t}(G')\le \sum_{i=1}^cl_iq_{r_i}(G')\\
&=\beta\cdot\Bigl(\sum_{i=1}^c \frac{l_ia(n_i)}{b(n_i)}\Bigr)\cdot\prod_{i=1}^c b(n_i)
=\beta\cdot\Bigl(\sum_{i=1}^c l_i\alpha(n_i)\Bigr)\cdot\prod_{i=1}^c b(n_i)\\
&\le \beta \cdot a\Bigl(\sum_{i=1}^c l_i\Bigr)\prod_{i=1}^c \phi(l_i,d_i)b(n_i+d_i)\\
&=\beta \cdot a(\deg_G(s)) \prod_{i=1}^c \phi(l_i,d_i)b(\deg_G(r_i))\\
&\le \beta \cdot a(\deg_G(s)) \prod_{i=1}^c b(Y_i)b(\deg_G(r_i))=q_s(G).
\end{align*}
This contradicts the assumption that $G$ is a counterexample.  Hence, no counterexample to Theorem~\ref{thm-paths} exists.
\end{proof}

\section{Postponed proofs}\label{sec:post_proof}

We use the Stirling bound for factorials in the following form.
\begin{lemma}\label{lemma-stirling}
For every integer $k\ge 1$,
$$\sqrt{2\pi}\le \frac{k!e^k}{k^{k+1/2}}\le e.$$
\end{lemma}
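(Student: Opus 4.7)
The plan is to show that $f(k):=k!\,e^{k}/k^{k+1/2}$ is strictly decreasing in $k$, starting from $f(1)=e$ and tending to $\sqrt{2\pi}$ as $k\to\infty$. Once this is established, the upper bound is immediate, since $f(k)\le f(1)=e$, and the lower bound follows because a strictly decreasing sequence cannot cross its limit, so $f(k)>\sqrt{2\pi}$ for every finite $k\ge 1$. The limit $\lim_{k\to\infty}f(k)=\sqrt{2\pi}$ will simply be invoked as the classical Stirling formula (or, if one prefers to keep the argument self-contained, derived from the Wallis product).

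The only substantive step is monotonicity. A routine expansion of $\log f(k)-\log f(k+1)$ gives
$$\log f(k)-\log f(k+1)=\bigl(k+\tfrac{1}{2}\bigr)\log\bigl(1+\tfrac{1}{k}\bigr)-1,$$
so it suffices to prove that $\bigl(k+\tfrac{1}{2}\bigr)\log(1+1/k)>1$ for every $k\ge 1$. The clean way to see this is to substitute $u=1/(2k+1)\in(0,1)$, under which $1+1/k=(1+u)/(1-u)$ and $k+\tfrac{1}{2}=1/(2u)$. The power series of $\log\frac{1+u}{1-u}$ then yields
$$\bigl(k+\tfrac{1}{2}\bigr)\log\bigl(1+\tfrac{1}{k}\bigr)=\frac{1}{2u}\cdot 2\sum_{j\ge 0}\frac{u^{2j+1}}{2j+1}=1+\frac{u^{2}}{3}+\frac{u^{4}}{5}+\cdots,$$
which is manifestly greater than $1$. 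I do not anticipate any real obstacle: the substitution $u=1/(2k+1)$ converts what would otherwise be a delicate near-cancellation between $(k+\tfrac{1}{2})\log(1+1/k)$ and $1$ into a visibly positive series, and everything else amounts to reading off $f(1)=e$ and quoting Stirling's limit.
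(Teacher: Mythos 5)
The paper states this lemma without proof, treating it as a known fact (it is the classical Stirling/Robbins bound), so there is no ``paper route'' to compare against. Your argument is correct and is essentially the standard proof of the two-sided Stirling inequality, due to Robbins: the telescoping computation
\[
\log f(k)-\log f(k+1)=\Bigl(k+\tfrac12\Bigr)\log\Bigl(1+\tfrac1k\Bigr)-1
\]
is right, the substitution $u=1/(2k+1)$ correctly turns this into the visibly positive series $\tfrac{u^2}{3}+\tfrac{u^4}{5}+\cdots$, so $f$ is strictly decreasing; together with $f(1)=e$ and $\lim_{k\to\infty}f(k)=\sqrt{2\pi}$ (Stirling's asymptotic, or the Wallis product if one insists on self-containment), this gives $\sqrt{2\pi}<f(k)\le e$ for all $k\ge1$, which is slightly stronger than the stated non-strict lower bound. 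The one place where your proof is not fully self-contained is the identification of the limit as $\sqrt{2\pi}$; you acknowledge this and point to Wallis, which is the usual way to close that gap.
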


This gives us the following bound for $\lambda$.
\begin{corollary}\label{cor-lambda}
For every integer $k\ge 2$,
$$1-\log(\sqrt{2\pi})\ge k(k-1)\lambda(k)-\bigl(k-\log\bigl(\rho e\sqrt{k}\bigr)\bigr)\ge 0$$
\end{corollary}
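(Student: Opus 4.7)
The plan is to unfold the definition of $\lambda(k)$ and reduce the three-term expression in the corollary to a simple function of the Stirling ratio $k!e^k/k^{k+1/2}$, so that Lemma~\ref{lemma-stirling} yields both inequalities with no further work.

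Concretely, by the definition $\lambda(k) = \frac{1}{k(k-1)}\log\frac{k^k}{\rho k!}$ we have
$$k(k-1)\lambda(k) = k\log k - \log\rho - \log k!.$$
On the other hand,
$$k - \log(\rho e\sqrt{k}) = k - \log\rho - 1 - \tfrac{1}{2}\log k.$$
Subtracting, the $\log\rho$ terms cancel and we obtain
$$k(k-1)\lambda(k) - \bigl(k - \log(\rho e\sqrt{k})\bigr) = (k + \tfrac{1}{2})\log k - \log k! - k + 1 = 1 - \log\frac{k!\,e^k}{k^{k+1/2}}.$$

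With the middle expression rewritten in this form, both halves of the desired inequality are equivalent to the two sides of Lemma~\ref{lemma-stirling}. The lower bound $k(k-1)\lambda(k) - (k - \log(\rho e\sqrt{k})) \ge 0$ is equivalent to $\frac{k!e^k}{k^{k+1/2}} \le e$, and the upper bound is equivalent to $\frac{k!e^k}{k^{k+1/2}} \ge \sqrt{2\pi}$. Applying Lemma~\ref{lemma-stirling} finishes the proof.

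There is no real obstacle here: the entire content of the corollary is bookkeeping to recognize that, once the factor $k(k-1)$ clears the denominator in the definition of $\lambda$ and one collects the $\log\rho$, $\log k$, and $\log k!$ terms, what remains is precisely the logarithm of the Stirling ratio. The only thing worth being careful about is the sign of the $-k$ and $-\tfrac{1}{2}\log k$ terms when combining them with $k\log k - \log k!$; a single line of arithmetic confirms the identity.
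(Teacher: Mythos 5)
Your proof is correct and is exactly what the paper has in mind: the paper states Corollary~\ref{cor-lambda} immediately after Lemma~\ref{lemma-stirling} with the single remark ``This gives us the following bound for $\lambda$,'' leaving the bookkeeping implicit, and your computation showing $k(k-1)\lambda(k) - \bigl(k - \log(\rho e\sqrt{k})\bigr) = 1 - \log\frac{k!\,e^k}{k^{k+1/2}}$ is the intended reduction. Both inequalities then follow directly from the two sides of the Stirling bound, just as you argue.
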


Let us now give the proofs postponed from the previous section.
Let us start with Lemma~\ref{lemma-concave}, whose statement we repeat for convenience.

\begin{lemcon}[Repeated]
\sconcave
\end{lemcon}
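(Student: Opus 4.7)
The plan is to reformulate both claims in terms of $\lambda(k)=\log(b(k+1)/b(k))$: the inequality $b(k+1)\ge b(k)$ is equivalent to $\lambda(k)\ge 0$, and the assertion that $b(k+1)/b(k)$ is decreasing is equivalent to $\lambda(k+1)\le\lambda(k)$.  Both will be extracted from the two-sided estimate provided by Corollary~\ref{cor-lambda}; denote $L_k=k-\log(\rho e\sqrt{k})$ and $\delta=1-\log\sqrt{2\pi}\approx 0.08$, so that $L_k\le k(k-1)\lambda(k)\le L_k+\delta$ for every $k\ge 2$.

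For the monotonicity of $b(k)$, it suffices to show $L_k\ge 0$ for $k\ge 2$.  Direct substitution gives $L_2=1-\log\rho-\tfrac12\log 2\approx 0.58>0$, while treating $L_k$ as a function of a real variable, its derivative is $1-1/(2k)\ge 3/4$ for $k\ge 2$.  Hence $L_k$ is positive throughout, and so is $\lambda(k)$.

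For the decreasing property, I would rewrite $\lambda(k+1)\le\lambda(k)$ in the equivalent form $(k-1)\cdot(k+1)k\lambda(k+1)\le(k+1)\cdot k(k-1)\lambda(k)$, and apply the upper bound from Corollary~\ref{cor-lambda} to the left factor and the lower bound to the right factor.  It then suffices to verify $(k-1)(L_{k+1}+\delta)\le(k+1)L_k$.  Expanding both sides, the $k^2$ terms cancel, the $\log\rho$ terms collect into a single $-2\log\rho$, and after simplification the inequality reduces to
\[(k-1)\log\sqrt{2\pi}+\tfrac{k-1}{2}\log\!\left(1+\tfrac{1}{k}\right)\ge \log k+2\log\rho.\]

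The main obstacle is establishing this final inequality for every $k\ge 2$.  The left-hand side grows linearly in $k$ via $(k-1)\log\sqrt{2\pi}$, while the right-hand side grows only like $\log k$, so for large $k$ there is ample slack.  Concretely, I would verify it at $k=2$ by direct substitution (LHS $\approx 1.12$, RHS $\approx 0.84$) and then observe that the derivative in $k$ of the difference equals $\log\sqrt{2\pi}+\tfrac12\log(1+1/k)-1/k-\tfrac{k-1}{2k(k+1)}$, which stays positive for every $k\ge 2$ because $\log\sqrt{2\pi}\approx 0.919$ dominates the subtracted terms (bounded above by $\tfrac12+\tfrac{1}{12}$).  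The delicate point is that the $\delta$-gap from Corollary~\ref{cor-lambda} turns out to be just small enough not to spoil the bound, which reflects the tightness of the underlying Stirling estimate.
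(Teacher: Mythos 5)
Your proposal is correct and follows essentially the same route as the paper: reformulate both claims in terms of $\lambda$, then obtain $\lambda(k)-\lambda(k+1)\ge 0$ by applying the lower estimate of Corollary~\ref{cor-lambda} to $\lambda(k)$ and the upper estimate to $\lambda(k+1)$. The paper simply makes one extra coarsening step after combining the bounds, replacing $\sqrt{2\pi(k+1)}$ by $\sqrt{2\pi k}$, which reduces the target to the cleaner form $(k-1)\log\sqrt{2\pi} - 2\log(\rho\sqrt{k}) \ge 0$; you keep the extra $\tfrac{k-1}{2}\log(1+1/k)$ term and instead close the argument with a derivative check. (One tiny imprecision: over real $k\ge 2$ the term $\tfrac{k-1}{2k(k+1)}$ peaks near $k=1+\sqrt 2$ at about $0.086$, slightly above $1/12$, but the estimate still goes through comfortably.)
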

\begin{proof}
Equivalently, we need to show that the function $\lambda$ is positive and decreasing.
Since $\rho<2$, we have $k^k>\rho k!$, and thus $\lambda(k)>0$ for every $k\ge 2$.
Furthermore, using Corollary~\ref{cor-lambda}, we have
\begin{align*}
\lambda&(k)-\lambda(k+1) \\ &\ge \frac{k(k+1)-(k+1)\log\bigl(\rho e\sqrt{k}\bigr)-(k+1)(k-1)+(k-1)\log \bigl(\rho\sqrt{2\pi(k+1)}\bigr)}{(k-1)k(k+1)}\\
&\ge \frac{k+1-(k+1)\log\bigl(\rho e\sqrt{k}\bigr)+(k-1)\log \bigl(\rho\sqrt{2\pi k}\bigr)}{(k-1)k(k+1)}\\
&\ge \frac{(k-1)\log \sqrt{2\pi}-2\log\bigl(\rho\sqrt{k}\bigr)}{(k-1)k(k+1)},
\end{align*}
which is positive for $k\ge 2$; hence, $\lambda(k)$ is decreasing for $k\ge 2$.
\end{proof}

Since the function $\lambda$ is decreasing, the function $\alpha(k)$ is decreasing for $k\ge 3$.
Actually, $\alpha(k)$ is decreasing for $k\ge 2$, as can be verified by calculating the values $\alpha(2)>1.61$
and $\alpha(3)<1.37$.

We will need an additional monotonicity result.
\begin{lemma}\label{l:b}
The sequence $b(k)/k$ is decreasing for $k\ge 1$. 
\end{lemma}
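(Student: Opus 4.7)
The plan is to reduce the claim to a quantitative upper bound on $\lambda(k)$ and then apply Corollary~\ref{cor-lambda}. First I will dispose of the base case $k=1$ directly: since $\rho = (4/3)^{1/4} < 2$, we have $b(2)/2 = \rho/2 < 1 = b(1)$.

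For $k \geq 2$, the inequality $b(k+1)/(k+1) \leq b(k)/k$ is equivalent, after taking logarithms, to
$$\lambda(k) \leq \log\bigl(1 + 1/k\bigr).$$
I would establish this by combining Corollary~\ref{cor-lambda} with the elementary inequality $\log(1+1/k) \geq 2/(2k+1)$ (equivalent to $(k+\tfrac12)\log(1+1/k)\geq 1$, which follows from $\log(1+x) \geq 2x/(2+x)$ for $x>0$). The upper bound provided by Corollary~\ref{cor-lambda} simplifies to
$$k(k-1)\lambda(k) \leq k - \log\bigl(\rho\sqrt{2\pi k}\bigr),$$
while $k(k-1)\log(1+1/k) \geq 2k(k-1)/(2k+1) = k - 3k/(2k+1)$. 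Matching these bounds, the desired claim reduces to
$$\log\bigl(\rho\sqrt{2\pi k}\bigr) \geq \frac{3k}{2k+1}.$$

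Since $3k/(2k+1) < 3/2$, it suffices to verify $\log(\rho\sqrt{2\pi k}) \geq 3/2$ for $k \geq 3$; this follows from the numerical check $\rho^2\cdot 6\pi = 12\pi/\sqrt{3} > e^3$ together with the fact that $k \mapsto \log(\rho\sqrt{2\pi k})$ is increasing. The single remaining case $k=2$ I will handle by direct computation, using $b(2)=\rho$ and $b(3)=\sqrt{2\rho}$ to verify $\rho/2 \geq \sqrt{2\rho}/3$, which simplifies to $\rho \geq 8/9$.

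The main obstacle is that the inequality $\log(\rho\sqrt{2\pi k})\geq 3k/(2k+1)$ is genuinely tight: the analogous bound fails at $k=1$ (since $\log(\rho\sqrt{2\pi}) < 1$), which is precisely why the $k=1$ case must be peeled off at the very start. Care is therefore needed not to introduce additional slack in any preceding reduction, or the small-$k$ cases would collapse; the remaining steps, however, are purely algebraic and should require only Stirling's bound as packaged in Corollary~\ref{cor-lambda}.
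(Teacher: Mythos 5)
Your proposal is correct and follows essentially the same route as the paper's proof: dispose of $k=1$ directly, reduce $k\ge 2$ to the inequality $\lambda(k)<\log(1+1/k)$, bound $\lambda(k)$ from above via Corollary~\ref{cor-lambda}, lower-bound $\log(1+1/k)$ by an elementary estimate, and finish with a uniform check for $k\ge 3$ plus a direct computation at $k=2$. The only difference is cosmetic: you use $\log(1+x)\ge 2x/(2+x)$ whereas the paper uses $\log(1+x)\ge x-x^2/2$, which leads to an algebraically equivalent final reduction.
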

\begin{proof}
We need to show that $$\frac{b(k+1)}{k+1} < \frac{b(k)}{k}$$ for every $k \geq 1$.
This is true for $k=1$, since $\rho<2$.  For $k\ge 2$, we equivalently need to verify that $\lambda(k)<\log (1+1/k)$.
Since $\log(1+x)\ge x - x^2/2$ for $x\ge 0$, by Corollary~\ref{cor-lambda},
it suffices to prove that
$$\frac{1}{k(k-1)}\Bigl(k-\log\bigl(\rho\sqrt{2\pi k}\bigr)\Bigr)<\frac{2k-1}{2k^2}.$$
This is equivalent to
$$k\log (2\rho^2\pi k)>3k-1.$$
This is true, since $\log (2\rho^2\pi k)>3$ for $k\ge 3$ and the case $k=2$ can be verified directly.
\end{proof}

We are now ready to prove Lemma~\ref{l:a}, whose statement we repeat for convenience.

\begin{lemsla}[Repeated]
\sla
\end{lemsla}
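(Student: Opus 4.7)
The plan is to split into the small-$k$ cases $k\in\{1,2,3\}$ and the inductive regime $k\ge 4$, since the definitions of $\alpha(k)$ for $k\le 3$ are given ad hoc while for $k\ge 4$ a clean recurrence is available.

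For the base cases I would directly substitute the explicit formulas for $\alpha$ and $b$, reducing each step to an elementary inequality in $\rho=(4/3)^{1/4}$. The inequality $a(0)=1\le a(1)=\sqrt{2/\rho^5}$ becomes $\rho^5\le 2$, which is immediate from $(4/3)^5<16$; the step $a(1)\le a(2)=\sqrt{3}$ is a similar manipulation; and $a(2)\le a(3)=(9/2)/(24\rho)^{1/4}$ reduces to $(9/2)^4\ge 216\rho$, clearly valid since $\rho<2$.

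For the inductive step with $k\ge 4$ the plan is to combine the recursive definition $\alpha(k)=\alpha(k-1)/(1+k(\lambda(k-1)-\lambda(k)))$ with the identity $b(k)/b(k-1)=e^{\lambda(k-1)}$ to write
\[\frac{a(k)}{a(k-1)}=\frac{e^{\lambda(k-1)}}{1+k(\lambda(k-1)-\lambda(k))},\]
and then apply $e^x\ge 1+x$ to the numerator, reducing the task to the cleaner inequality $k\lambda(k)\ge(k-1)\lambda(k-1)$. Setting $g(k):=k\lambda(k)=\tfrac{1}{k-1}\log\tfrac{k^k}{\rho k!}$, I would compute $(k-1)g(k)-(k-2)g(k-1)$ (using $\log k!-\log(k-1)!=\log k$) to extract the telescoping recurrence
\[g(k)=\frac{k-2}{k-1}\,g(k-1)+\log\frac{k}{k-1},\]
from which $g(k)\ge g(k-1)$ is equivalent to $\log(k/(k-1))\ge g(k-1)/(k-1)=\lambda(k-1)$. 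This last step is exactly Lemma~\ref{l:b}, because $b(k)/b(k-1)\le k/(k-1)$ yields $\lambda(k-1)\le\log(k/(k-1))$.

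The main obstacle is spotting the right chain of reductions: once one notices that $e^x\ge 1+x$ combined with the telescoping identity for $g(k)$ lands the problem precisely on the monotonicity of $b(k)/k$ established in Lemma~\ref{l:b}, the argument collapses and one avoids any appeal to the finer Stirling-based bounds of Corollary~\ref{cor-lambda}.
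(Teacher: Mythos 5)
Your proposal is correct and, in structure, matches the paper's proof: direct verification of the base cases $k\le 3$ followed by an inductive step for $k\ge 4$ built from the ratio $\tfrac{a(k)}{a(k-1)}=\tfrac{e^{\lambda(k-1)}}{1+k(\lambda(k-1)-\lambda(k))}$, the inequality $1+x\le e^x$, and finally Lemma~\ref{l:b}. The only real difference is cosmetic: after applying $1+x\le e^x$, the paper simplifies $\tfrac{b^{k-1}(k-1)b^k(k+1)}{b^{2k-1}(k)}$ by two applications of the recursion $b(k+1)^k=kb(k)^{k-1}$, whereas you work additively with $g(k)=k\lambda(k)$ and extract the telescoping identity $g(k)=\tfrac{k-2}{k-1}g(k-1)+\log\tfrac{k}{k-1}$; both routes reduce the claim to $\lambda(k-1)\le\log\tfrac{k}{k-1}$, which is Lemma~\ref{l:b}. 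One small caveat on your closing remark: you do not in fact avoid the Stirling machinery of Corollary~\ref{cor-lambda}, since Lemma~\ref{l:b} itself is proved from it; the dependency chain is the same as in the paper.
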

\begin{proof}
We have $a(0)=1$, and a direct calculation shows that $1.1 < a(1) < 1.2$, $1.7<a(2)<1.8$ and $a(3)>1.9$;
hence, $a(0)<a(1)<a(2)<a(3)$.  For $k \geq 4$, we have (using the definitions, (\ref{e:brecursion})
to replace $b^k(k+1)$ and $b^{k-1}(k)$, and Lemma~\ref{l:b})
\begin{align*}
\frac{a(k)}{a(k-1)}&=\frac{\alpha(k)b(k)}{\alpha(k-1)b(k-1)}=\frac{b(k)}{b(k-1)(1+k(\lambda(k-1)-\lambda(k)))}\\
&\ge \frac{b(k)}{b(k-1)e^{k(\lambda(k-1)-\lambda(k))}}=\frac{b^{k-1}(k-1)b^k(k+1)}{b^{2k-1}(k)}\\
&=\frac{kb^{k-1}(k-1)}{b(k)\cdot b^{k-1}(k)}=\frac{kb(k-1)}{(k-1)b(k)}>1.
\end{align*}
\end{proof}

\begin{corollary}\label{c:ka}
The sequence $k\alpha(k)$ is increasing for $k\ge 1$.
\end{corollary}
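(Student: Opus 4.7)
The plan is very short: we simply rewrite $k\alpha(k)$ in terms of the two monotone quantities that have already been established by the preceding lemmas. Since $a(k) = \alpha(k)b(k)$ by definition, we have
\[
k\alpha(k) \;=\; \frac{k\,a(k)}{b(k)} \;=\; \frac{a(k)}{\,b(k)/k\,}.
\]
Now Lemma~\ref{l:a} tells us that the numerator $a(k)$ is (strictly) increasing in $k\ge 0$, while Lemma~\ref{l:b} tells us that the denominator $b(k)/k$ is positive and decreasing in $k\ge 1$. Dividing an increasing positive sequence by a decreasing positive sequence yields an increasing sequence, which is exactly the claim.

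The only thing worth double-checking before writing this up is that both sequences are strictly positive on the relevant range $k\ge 1$ (so that the division is justified and the monotonicities really combine in the expected way). This is immediate: $b(k)\ge 1$ for every $k\ge 0$ by the definition of $b$, and $a(k) = \alpha(k)b(k)\ge b(k)\cdot \alpha(k) > 0$ since every $\alpha(k)$ is manifestly positive from its recursive definition (the numerical values $\alpha(0),\alpha(1),\alpha(2),\alpha(3)$ are positive, and the recursion $\alpha(k) = \alpha(k-1)/(1 + k(\lambda(k-1)-\lambda(k)))$ preserves positivity because Lemma~\ref{lemma-concave} gives $\lambda(k-1) > \lambda(k)$).

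There is no real obstacle here; the corollary is a direct packaging of Lemma~\ref{l:a} together with Lemma~\ref{l:b}. The substantive monotonicity work was already done in proving those two lemmas, and the point of stating Corollary~\ref{c:ka} separately is presumably just to have the form $k\alpha(k)$ readily available for later use.
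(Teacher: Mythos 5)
Your proof is correct and matches the paper's argument exactly: both rewrite $k\alpha(k) = a(k)/(b(k)/k)$ and combine Lemma~\ref{l:a} (numerator increasing) with Lemma~\ref{l:b} (denominator decreasing), together with positivity. The extra remarks on positivity are a sensible sanity check but add nothing essentially new.
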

\begin{proof}
Since $k\alpha(k)=\frac{a(k)}{b(k)/k}$, the claim follows from Lemmas~\ref{l:b} and \ref{l:a}.
\end{proof}

Lemma~\ref{lemma-funnelweight} is straightforward to prove.

\begin{lemfun}[Repeated]
\sfunnelweight
\end{lemfun}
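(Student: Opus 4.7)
The plan is to first reduce the two-variable inequality to two one-variable inequalities by splitting on $d = 1$ versus $d \ge 2$, and then verify each by induction on $l$. For the reduction, when $d \ge 2$ I would apply Lemma~\ref{lemma-funnelbase-ineq} with $n = l$ and $m = \min(d, l-d+2)$ to get $b(d)b(l-d+2) \ge b(2)b(l) = \rho b(l)$, which yields $a(l)\phi(l,d)b(d) \ge a(l)b(l)\rho^{l-1}$; when $d = 1$ the expression is already $\phi(l,1)b(1) = \rho^{l-2}b(l+1)$. Thus it suffices to prove
$$a(l)b(l)\rho^{l-1} \ge l \text{ for } l \ge 2, \qquad a(l)\rho^{l-2}b(l+1) \ge l \text{ for } l \ge 1.$$

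For the first inequality, the base case $l = 2$ is in fact equality since $a(2)b(2)\rho = (2/\rho^2)\rho^2 = 2$, and $l = 3$ follows by direct computation using $a(3) = 6/(\rho^4 b(5))$, $b(3) = \sqrt{2\rho}$, and $b(5) = (24\rho)^{1/4}$. For $l \ge 3$ I would argue inductively: using the recursion $\alpha(l+1) = \alpha(l)/(1 + (l+1)(\lambda(l) - \lambda(l+1)))$ along with $b(l+1)/b(l) = e^{\lambda(l)}$, the inductive step reduces to showing
$$\rho e^{2\lambda(l)} \ge \frac{l+1}{l}\bigl(1 + (l+1)(\lambda(l)-\lambda(l+1))\bigr),$$
which should follow from the explicit bounds on $\lambda$ in Corollary~\ref{cor-lambda}.

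For the second inequality, the base case $l = 1$ is direct: $a(1)\rho^{-1}b(2) = \alpha(1) = \sqrt{2/\rho^5} > 1$. For $l \ge 2$, whenever $b(l+1) \ge \rho b(l)$ (equivalently $\lambda(l) \ge \log \rho$), the second inequality follows immediately from the first; by monotonicity of $\lambda$ (Lemma~\ref{lemma-concave}), this handles all $l$ up to some explicit threshold $l_0$, and for $l > l_0$ a parallel induction using the $\alpha$-recursion together with~(\ref{e:brecursion}) handles the remainder. The hard part will be the inductive step of the first inequality, which is tight near $l = 3, 4$ and demands the full strength of Corollary~\ref{cor-lambda}; handling the few additional base cases required by the induction and matching up the threshold $l_0$ in the second inequality will require some careful bookkeeping, but no conceptually new ideas.
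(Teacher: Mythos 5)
Your approach is genuinely different from the paper's. The paper defines $p(l)=a(l)\min_{1\le d\le l}\phi(l,d)b(d)-l$, verifies $p(l)\ge 0$ numerically through $l=9$, and then observes via Lemmas~\ref{lemma-concave} and \ref{l:a} that $a(l)\phi(l,d)b(d)\ge \rho\, a(l-1)\phi(l-1,d')b(d')$ (with $d'=\max(d-1,1)$), so the slack $p(l)$ grows multiplicatively; the inductive step uses nothing beyond the monotonicity of $a$ and $b$. You instead first isolate the two extremal values of $d$: using Lemma~\ref{lemma-funnelbase-ineq} you correctly show $a(l)\phi(l,d)b(d)\ge a(l)b(l)\rho^{l-1}$ for $d\ge 2$ (equality at $d=2$), while $d=1$ gives $a(l)\rho^{l-2}b(l+1)$, reducing everything to two one-variable inequalities. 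This is a structurally cleaner decomposition that makes it transparent that the minimum over $d$ is attained at $d\in\{1,2\}$, a fact the paper never needs to pin down. Your inductive step, however, invokes the $\alpha$-recursion and reduces to $\rho e^{2\lambda(l)}\ge\frac{l+1}{l}\bigl(1+(l+1)(\lambda(l)-\lambda(l+1))\bigr)$, which you leave as ``should follow from Corollary~\ref{cor-lambda}.'' That is not a routine consequence: the inequality is tight near $l=3,4$ (I computed a margin of only a few percent), the linear bound $e^x\ge 1+x$ is not enough there, and extracting both the needed lower bound on $\lambda(l)$ and upper bound on $\lambda(l)-\lambda(l+1)$ from Corollary~\ref{cor-lambda} requires real work, comparable in bulk to the paper's numerical checks for $l\le 9$. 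You also still owe a separate argument for inequality~(2) when $l\ge 13$ (once $\lambda(l)<\log\rho$). So the route is sound and arguably more illuminating, but it trades the paper's elementary multiplicative step for a delicate analytic estimate that you have sketched rather than verified; to be complete you would need to carry out that estimate (or fall back on the paper's $\times\rho$ observation applied to your reduced inequalities, in which case you would reintroduce roughly the same numerical base cases and lose the advantage of your approach).
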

\begin{proof}
For $l\ge 1$, let us define $p(l)=a(l)\min\{\phi(l,d)b(d):1\le d\le l\}-l$;
hence, we aim to prove that $p(l)\ge 0$ for every $l\ge 1$.
The claim is trivial for $l=1$.  For $l=d=2$,
we have $a(2)\phi(2,2)b(2)=\alpha(2)\rho^3=2$ by the definition of $\alpha(2)$;
and for $l=2$ and $d=1$, we have $a(2)\phi(2,1)b(1)=a(2)b(3)>2.53$;
hence, $p(2)=0$.  We have numerically verified that $p(3)>0.3$, $p(4)>0.9$, $p(5)>1$, $p(6)>2$, $p(7)>3$,
$p(8)>5$, and $p(9)>7$.

We now prove by induction that $p(l)>7$ for $l\ge 9$.
We just established the basic case $l=9$.
Suppose now $l\ge 10$, and consider any integer $d\in\{1,\ldots,l\}$.  Let $d'=1$ if $d=1$ and $d'=d-1$ otherwise.
Lemmas~\ref{lemma-concave} and \ref{l:a} imply $$a(l)\phi(l,d)b(d)\ge a(l-1)\phi(l-1,d')b(d')\cdot \rho,$$
and thus $p(l)+l\ge (p(l-1)+l-1)\rho$ and
$$p(l)\ge p(l-1)+(\rho-1) (p(l-1) + l)-\rho>7+17(\rho-1)-\rho>7,$$
as required.
\end{proof}

We now start our work towards proving the crucial Lemma~\ref{lemma-main-ineq}.
For integers $k, n\ge 2$ and $l\ge 1$, let $$f(k,n,l)=k\alpha(k)\Bigl(\log \frac{\rho^{l-1}b(n+l)}{b(n)} - l \lambda(k)\Bigr).$$
We say that a triple $(k,n,l)$ is \emph{well-behaved} if $f(k,n,l) \geq l(\alpha(n)-\alpha(k))$.
The motivation for this definition is as follows.  Consider the situation in Lemma~\ref{lemma-main-ineq}
in the special case that $l_i=d_i$ for $i=1,\ldots,c$ (it is easy to reduce the proof to this case), and let
$k=\sum_{i=1}^c l_i$.  If all triples $(k,n_i,l_i)$ for $i=1,\ldots,c$ are well-behaved, the inequality of Lemma~\ref{lemma-main-ineq}
follows by a straightforward manipulation.
Let us remark that
\begin{equation}\label{eq:casel1}
f(k,n,1)=k\alpha(k)(\lambda(n)-\lambda(k)),
\end{equation}
and in particular $f(k,k,1)=0$ for every $k\ge 2$; hence, the triple $(k,k,1)$ is well-behaved.
The next lemma allows us to generate many additional well-behaved triples.

\begin{lemma}\label{l:induct1}
Let $(k,n,l)$ be a well-behaved triple of integers, where $k,n\ge 2$ and $l\ge 1$.
\begin{enumerate}[\textnormal{(\alph*)}]
\item If $k \geq \max (n,3)$, then the triple $(k+1,n,l)$ is well-behaved.
\item If $n \ge \max (k, 3)$ and $l=1$, then the triple $(k,n+1,1)$ is well-behaved.
\end{enumerate}
\end{lemma}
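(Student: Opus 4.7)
The plan is to introduce the slack $E(k,n,l) = f(k,n,l) - l(\alpha(n)-\alpha(k))$, which is non-negative by the well-behavedness hypothesis, and to show in each part that the relevant one-step difference of $E$ is non-negative; the conclusion then follows immediately.

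For part (a), I expect the difference $E(k+1,n,l)-E(k,n,l)$ to admit a clean factored form. Since $k \geq 3$, the recursion $\alpha(k) = \alpha(k+1)\bigl(1+(k+1)(\lambda(k)-\lambda(k+1))\bigr)$ yields $l(\alpha(k)-\alpha(k+1)) = l(k+1)\alpha(k+1)(\lambda(k)-\lambda(k+1))$. Writing $L = \log(\rho^{l-1}b(n+l)/b(n))$ so that $f(k,n,l) = k\alpha(k)(L-l\lambda(k))$, and substituting into
\[ E(k+1,n,l) - E(k,n,l) = \bigl(f(k+1,n,l)-f(k,n,l)\bigr) - l(\alpha(k)-\alpha(k+1)), \]
the $\lambda(k+1)$ cross terms should cancel, leaving the identity
\[ E(k+1,n,l) - E(k,n,l) = (L - l\lambda(k))\bigl((k+1)\alpha(k+1) - k\alpha(k)\bigr). \]
The second factor is non-negative by Corollary~\ref{c:ka}. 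For the first, note that $L-l\lambda(k) = f(k,n,l)/(k\alpha(k))$; since $k\geq n\geq 2$ and $\alpha$ is decreasing on integers $\geq 2$, we have $\alpha(n)\geq\alpha(k)$, hence $f(k,n,l)\geq l(\alpha(n)-\alpha(k))\geq 0$ and so $L-l\lambda(k)\geq 0$.

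For part (b), the assumption $l=1$ simplifies things dramatically via (\ref{eq:casel1}): $f(k,n+1,1)-f(k,n,1) = k\alpha(k)(\lambda(n+1)-\lambda(n))$. The desired inequality $E(k,n+1,1) \geq E(k,n,1)$ therefore reduces to $k\alpha(k)(\lambda(n)-\lambda(n+1)) \leq \alpha(n)-\alpha(n+1)$. Since $n\geq 3$, the $\alpha$-recursion at index $n+1$ gives $\alpha(n)-\alpha(n+1) = (n+1)\alpha(n+1)(\lambda(n)-\lambda(n+1))$; dividing through by $\lambda(n)-\lambda(n+1)>0$ (Lemma~\ref{lemma-concave}) reduces the inequality to $k\alpha(k)\leq (n+1)\alpha(n+1)$, which follows from $n+1>k$ together with the monotonicity of $k\alpha(k)$ from Corollary~\ref{c:ka}.

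The main technical obstacle is spotting the factorization in part (a) and then checking non-negativity of both factors; once that identity is written down, both parts collapse to the monotonicity of $k\alpha(k)$. The restriction $l=1$ in part (b) is precisely what makes the $f$-difference in $n$ collapse via (\ref{eq:casel1}) to a single $\lambda$-difference; for general $l$ this shortcut is not available and the analogous argument with $n$ varying would not obviously go through.
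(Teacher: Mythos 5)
Your proof is correct and is essentially the paper's argument: both parts rest on the $\alpha$-recursion identity (\ref{e:alpha}), the decreasingness of $\lambda$, and the monotonicity of $k\alpha(k)$ from Corollary~\ref{c:ka}. The paper presents part~(a) as a chain of lower bounds on $f(k+1,n,l)$ rather than isolating the slack $E$ and writing down the factored one-step difference, but the two presentations are algebraically identical, and part~(b) is handled the same way in both.
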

\begin{proof}
Note that the definition of $\alpha$ gives
\begin{equation}\label{e:alpha}
(x+1)\alpha(x+1)(\lambda(x+1)-\lambda(x)) = \alpha(x+1)-\alpha(x).
\end{equation}
for every integer $x\ge 3$.
\begin{enumerate}[(a)]
\item We have $f(k,n,l)\ge l(\alpha(n)-\alpha(k)) \geq 0$ as $(k,n,l)$ is well-behaved, $k\geq n\ge 2$ and $\alpha(x)$ is decreasing for $x\ge 2$.
Using (\ref{e:alpha}) and Corollary~\ref{c:ka} we get
\begin{align*}
f&(k+1,n,l)  \\
&=\frac{(k+1)\alpha(k+1)}{k\alpha(k)}f(k,n,l) - l(k+1)\alpha(k+1)(\lambda(k+1)-\lambda(k)) \\
&\geq f(k,n,l) - l(\alpha(k+1)-\alpha(k)) \geq  l(\alpha(n)-\alpha(k+1)),
\end{align*} as desired.
\item
By (\ref{eq:casel1}), Lemma~\ref{lemma-concave} and Corollary~\ref{c:ka}, and (\ref{e:alpha}), we get
\begin{align*}
f&(k,n+1,1)=f(k,n,1)-k\alpha(k)(\lambda(n)-\lambda(n+1)) \\
&\geq f(k,n,1)-(n+1)\alpha(n+1)(\lambda(n)-\lambda(n+1)) \\
&\geq (\alpha(n)-\alpha(k)) +\alpha(n+1)-\alpha(n) = \alpha(n+1) - \alpha(k).
\end{align*}
Consequently, the triple $(k,n+1,1)$ is well-behaved.
\end{enumerate}
\end{proof}	 

\begin{corollary}\label{c:well}
The triple $(k,n,1)$ is well-behaved for all pairs of positive integers $k$ and $n$ such that $k,n \geq 2$.
Moreover, the triple $(k,2,3)$ is well-behaved for all $k \geq 4$, and $(k,3,4)$ is well-behaved for all $k \geq 5$.
\end{corollary}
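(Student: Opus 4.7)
The plan is to treat this as a covering problem: by~(\ref{eq:casel1}) the diagonal triples $(k,k,1)$ for $k\ge 2$ are well-behaved for free, so I will use Lemma~\ref{l:induct1} to spread well-behavedness off the diagonal, supplemented by a small number of direct base-case verifications.

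For the first statement, I would split the set $\{(k,n,1):k,n\ge 2\}$ into the pairs with $k,n\ge 3$ and those with $\min(k,n)=2$. When $k,n\ge 3$, starting from $(k,k,1)$ the hypothesis $k\ge\max(k,3)$ of Lemma~\ref{l:induct1}(a) is satisfied, so repeated application would give $(m,k,1)$ for every $m\ge k$; symmetrically, repeated application of Lemma~\ref{l:induct1}(b) would give $(k,n,1)$ for every $n\ge k$, and the two families together cover every pair with $k,n\ge 3$. The boundary cases $k=2$ and $n=2$ are the delicate ones, since $(2,2,1)$ cannot be pushed off the diagonal by either clause of Lemma~\ref{l:induct1}. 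I would therefore verify $(2,3,1)$ and $(3,2,1)$ directly by substituting the defining values of $\alpha$, $b$, and $\lambda$ into~(\ref{eq:casel1}) and checking the resulting numerical inequalities; once these are in hand, Lemma~\ref{l:induct1}(b) extends $(2,3,1)$ to $(2,n,1)$ for all $n\ge 3$, and Lemma~\ref{l:induct1}(a) extends $(3,2,1)$ to $(k,2,1)$ for all $k\ge 3$.

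For the moreover part, Lemma~\ref{l:induct1}(b) is unavailable (it requires $l=1$), but Lemma~\ref{l:induct1}(a) applies whenever the current triple satisfies $k\ge\max(n,3)$, a condition satisfied by both $(4,2,3)$ and $(5,3,4)$ and trivially preserved when $k$ is increased. It will therefore suffice to verify these two base cases by direct computation and then invoke Lemma~\ref{l:induct1}(a) iteratively to obtain $(k,2,3)$ for all $k\ge 4$ and $(k,3,4)$ for all $k\ge 5$.

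The main obstacle will be the numerical margins in the four direct verifications $(2,3,1)$, $(3,2,1)$, $(4,2,3)$, and $(5,3,4)$: each is a concrete inequality with rather small slack, so the arithmetic has to be carried out with enough precision using the explicit formulas $b(k)=((k-1)!\rho)^{1/(k-1)}$, $\lambda(k)=\log(b(k+1)/b(k))$, and the recursive definition of $\alpha$. Everything else reduces to a clean induction driven by Lemma~\ref{l:induct1}, with no new ideas needed beyond the correct choice of base cases.
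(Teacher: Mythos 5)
Your proposal is correct and follows essentially the same route as the paper: start from the free diagonal triples $(k,k,1)$, pin down the four base cases $(2,3,1)$, $(3,2,1)$, $(4,2,3)$, $(5,3,4)$ by direct numerical verification, and propagate with Lemma~\ref{l:induct1}. The paper states the propagation more tersely, but the covering argument you spell out is exactly what is implicit there.
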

\begin{proof}
We have numerically verified that triples $(2,3,1)$, $(3,2,1)$, $(4,2,3)$, and $(5,3,4)$
are well-behaved.  As $(n,n,1)$ is well-behaved for all $n\ge 2$, and the triples $(2,3,1)$ and $(3,2,1)$
are well-behaved, Lemma~\ref{l:induct1} implies that $(k,n,1)$ is well-behaved
for $k,n \geq 2$.

Moreover, since $(4,2,3)$ is well-behaved, Lemma~\ref{l:induct1} implies that $(k,2,3)$ is well-behaved for all
$k \geq 4$.  Similarly, $(5,3,4)$ being well-behaved implies that $(k,3,4)$ is well-behaved for all $k \geq 5$.
\end{proof}

We need one more technical computational lemma.

\begin{lemma}\label{l:cases}
If integers $k \geq l \geq 2$, $n \geq 2$ satisfy
\begin{align}
\rho^{l-1}\frac{b(n+l)}{b(n)}&<\left(\frac{b(n+1)}{b(n)}\right)^l \label{e:t1}, \\
a(k)\rho^{l-1}b(n+l)-a(k-l)b(n)&<la(n) \label{e:t2}, \\
a(k)\rho^{l-1}b(n+l)-a(k-1)\rho^{l-2}b(n+l-1)&<a(n),  \label{e:t3}
\end{align}
then the triple $(k,n,l)$ is well-behaved.
\end{lemma}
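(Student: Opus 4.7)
My plan is to derive the well-behavedness inequality
$$k\alpha(k)\log R - k\alpha(k)l\lambda(k) \ge l(\alpha(n)-\alpha(k)),$$
where $R := \rho^{l-1}b(n+l)/b(n)$, from the three hypotheses by combining algebraic identities for $a$, $b$, $\alpha$, $\lambda$ with elementary analytic estimates. First, I would unpack both sides into a common language. Using $a(k) = \alpha(k)b(k)$, the recursion (\ref{e:brecursion}) in the form $k\lambda(k) = \log(k/b(k))$, and the identity (\ref{e:alpha}) expressing $\alpha$-differences as $\lambda$-differences weighted by $(j+1)\alpha(j+1)$, both the target left-hand and right-hand sides can be rewritten in terms of $a$-values, $b$-values, and logarithms of their ratios.

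Next, I would apply the elementary bound $\log(1+x) \le x$ to convert the multiplicative hypotheses (\ref{e:t2}) and (\ref{e:t3}) into logarithmic estimates. Dividing (\ref{e:t2}) by $a(k-l)b(n)$ yields
$$\log \frac{a(k)\rho^{l-1}b(n+l)}{a(k-l)b(n)} < \frac{la(n)}{a(k-l)b(n)},$$
while dividing (\ref{e:t3}) by $a(k-1)\rho^{l-2}b(n+l-1)$ yields
$$\log \frac{a(k)\rho^{l-1}b(n+l)}{a(k-1)\rho^{l-2}b(n+l-1)} < \frac{a(n)}{a(k-1)\rho^{l-2}b(n+l-1)}.$$
These, combined with (\ref{e:t1}) in its logarithmic form $\log R < l\lambda(n)$, yield three complementary log-linear estimates. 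The key structural observation is that setting $u_j := a(k-l+j)\rho^{\max(j-1,0)}b(n+j)$ for $0\le j\le l$, hypothesis (\ref{e:t2}) bounds the total telescoping difference $u_l - u_0$, hypothesis (\ref{e:t3}) bounds the final step $u_l - u_{l-1}$, and (\ref{e:t1}) independently controls $\log R$.

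Finally, I would combine these estimates with the rearranged target. Expanding $l(\alpha(n)-\alpha(k))$ as a telescoping sum $\sum_{j}(j+1)\alpha(j+1)(\lambda(j+1)-\lambda(j))$ via (\ref{e:alpha}), the monotonicity results (Lemma~\ref{l:a} for $a$, Corollary~\ref{c:ka} for $(j+1)\alpha(j+1)$, and Lemma~\ref{l:b} for $b(k)/k$) should allow the coefficients in the combined estimate to match those in the target. The main obstacle I anticipate is that all three hypotheses are upper bounds, whereas well-behavedness is ostensibly a lower bound on $\log R$; the resolution must come from the precise algebraic structure of $a$ and $b$, namely that the upper bounds force the specific values of $a(k), a(k-l), b(n+l)$ to be tightly constrained relative to one another via (\ref{e:brecursion}). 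A case analysis based on the sign of $\alpha(n)-\alpha(k)$ (equivalently, whether $n \le k$ or $n > k$, so that $\lambda(n)-\lambda(k)$ switches sign) will be needed, with each case handled by matching one subset of the hypotheses to the dominant side of the inequality.
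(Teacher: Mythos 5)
Your proposal misses the crucial structural idea and would not succeed. You correctly identify the central obstacle---all three hypotheses (\ref{e:t1})--(\ref{e:t3}) are upper bounds, while well-behavedness amounts to a lower bound on $\log\bigl(\rho^{l-1}b(n+l)/b(n)\bigr)$---but then assert, optimistically, that the ``precise algebraic structure of $a$ and $b$'' must somehow flip the direction. It does not. No combination of those upper bounds, logarithmic smoothing, telescoping, or the monotonicity lemmas can manufacture a lower bound in general.

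The paper's actual proof is of a completely different character: it does not attempt to derive well-behavedness analytically from the hypotheses at all. Instead it shows that the three hypotheses are so restrictive that only a small, explicitly enumerable family of triples $(k,n,l)$ can satisfy them, and then appeals to Corollary~\ref{c:well} for those. Concretely: (\ref{e:t1}) together with monotonicity of $b$ forces $b(n+1)/b(n)>\rho$, hence $n\le 12$ since $b(k+1)/b(k)$ is decreasing and $b(14)/b(13)<\rho$; dividing (\ref{e:t2}) by $a(n)$ and using that $a$ is increasing and $\alpha$ is decreasing bounds $l\le 39$; a finite computation over $2\le n\le 12$, $2\le l\le 39$ leaves a 14-element set $P$ of pairs $(n,l)$; (\ref{e:t3}) with $a$ increasing handles $k\ge l+10$, reducing $P$ to $\{(2,3),(3,4)\}$ there; the remaining finitely many $k$ with $l\le k\le l+9$ are checked directly. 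All surviving triples---$(k,2,3)$ for $k\ge 4$ and $(k,3,4)$ for $k\ge 5$---are well-behaved by Corollary~\ref{c:well}, which your proposal never invokes. In short, the lemma is true because the hypothesis set is nearly empty and the exceptional cases happen to be well-behaved by earlier work, not because of an analytic implication; your telescoping sequence $u_j$ is a reasonable rewriting of (\ref{e:t2}) and (\ref{e:t3}) but it gives no route to the conclusion, and the case split you suggest (sign of $\alpha(n)-\alpha(k)$) is not where the real structure lies.
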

\begin{proof}
We will show that if the conditions of the lemma are satisfied then $n=2$, $l=3$
and $k \geq 4$, or  $n=3$, $l=4$ and $k \geq 5$, implying the lemma by
Corollary~\ref{c:well}.

Since $b$ is increasing by Lemma~\ref{lemma-concave}, (\ref{e:t1}) implies $b(n+1)/b(n) > \rho$.
Since $b(n+1)/b(n)$ is decreasing by Lemma~\ref{lemma-concave} and $b(14)/b(13)<\rho$,
we have $$n \leq 12.$$ 
	
By Lemma~\ref{l:a}, $a$ is increasing, and thus (\ref{e:t2}) implies 
\begin{align}
a(l)(\rho^{l-1}b(n+l)- b(n))&\le a(k)(\rho^{l-1}b(n+l)- b(n))\nonumber\\
&\le a(k)\rho^{l-1}b(n+l)-a(k-l)b(n)<la(n)\label{e:t4}.
\end{align}	
We claim that $b(n+l) \geq b(n+2)\ge \alpha(n)b(n)=a(n)$ for all $n,l \geq 2$. Indeed, since $\alpha(5)<1$
and $\alpha$ is decreasing, we have $\alpha(n)<1$ and $b(n+2)>b(n)>\alpha(n)b(n)$ for $n\ge 5$, and we can verify the inequality for $n\in\{2,3,4\}$
by direct calculation.  Dividing the inequality (\ref{e:t4}) by $a(n)$ and using the fact that $\alpha$ is decreasing and $n\le 12$,
we obtain
$$l>a(l)\Bigl(\rho^{l-1}\frac{b(n+l)}{a(n)}-\frac{1}{\alpha(n)}\Bigr)\ge a(l)(\rho^{l-1}-1/\alpha(12)).$$
This only holds for $$l\le 39.$$
A direct computation for $2\le n\le 12$ and $2\le l\le 39$ shows that (\ref{e:t1}) and
(\ref{e:t4}) only hold when $(n,l)$ is contained in the set
$$P=\{(2,3),\ldots, (2,7), (3,4), \ldots, (3,9), (4,7), (4,8), (4,9)\}.$$
Let us remark that for $n=l=2$, we have $\rho\tfrac{b(4)}{b(2)}=\bigl(\tfrac{b(3)}{b(2)}\bigr)^2$ in contradiction
to (\ref{e:t1}) by the definition of $b$, while in all other cases not belonging to $P$, the inequalities fail by at least $0.03$.

If $k \geq l+10$, then since both $a$ and $b$ are increasing, by (\ref{e:t3}) we have
\begin{align*}
&a(l+10)\rho^{l-2}(\rho b(n+l)-b(n+l-1))\\
&\le a(k)\rho^{l-2}(\rho b(n+l)-b(n+l-1))\\
&\le a(k)\rho^{l-1}b(n+l)-a(k-1)\rho^{l-2}b(n+l-1)<a(n);
\end{align*}
a direct computation shows that among the possible pairs $(n,l)\in P$,
this is only satisfied by $(n,l)=(2,3)$ and $(n,l)=(3,4)$.
Hence, in this case we have $n=2$ and $l=3$ or $n=3$ and $l=4$, and $k\ge 13$.

To finish the proof, we verify by a direct computation that
the only triples $(k,n,l)$ satisfying $(n,l)\in P$, $l\le k\le l+9$, (\ref{e:t1}), (\ref{e:t2}), and (\ref{e:t3})
are those where $n=2$, $l=3$ and $k \geq 4$, or  $n=3$, $l=4$ and $k \geq 5$.
Let us remark that in the case $k=3$, $n=2$, $l=3$ we have
$a(3)\rho^2b(5)-a(0)b(2)=3a(2)$ by the definition of $\alpha(2)$ and $\alpha(3)$, contradicting (\ref{e:t2});
in all other cases, the inequalities fail by at least $0.005$.
\end{proof}

We are now ready to prove the variant of Lemma~\ref{lemma-main-ineq} where $l_i=d_i$ for $1\le i\le c$.

\begin{lemma}\label{l:main}
Let $c\ge 0$, $l_1, \ldots, l_c\ge 1$, and $n_1,\ldots,n_c\ge 2$ be integers.
Let $k = \sum_{i=1}^{c}l_i$.  Then
\begin{equation}\label{e:main2}
a(k)\prod_{i=1}^{c}\left(\rho^{l_i-1} \frac{b(n_i+l_i)}{b(n_i)} \right)\geq \sum_{i=1}^{c}l_i\alpha(n_i).
\end{equation}
\end{lemma}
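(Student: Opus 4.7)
The key observation is that, if every triple $(k, n_i, l_i)$ with $l_i \ge 1$ is well-behaved (in the sense defined before Lemma~\ref{l:induct1}), then (\ref{e:main2}) follows by a short algebraic manipulation. Indeed, summing the defining inequality $f(k,n_i,l_i) \ge l_i(\alpha(n_i)-\alpha(k))$ over $i$ and using $\sum_i l_i = k$ gives
\[
k\alpha(k)\log P - k^2\alpha(k)\lambda(k) \;\ge\; S - k\alpha(k),
\]
where $P=\prod_i \rho^{l_i-1}b(n_i+l_i)/b(n_i)$ and $S=\sum_i l_i\alpha(n_i)$. Dividing by $k\alpha(k)$, adding $\log a(k) + k\lambda(k) = \log(k\alpha(k))$ (which follows from (\ref{e:brecursion})) and applying the elementary inequality $x - 1 \ge \log x$ to $x = S/(k\alpha(k))$ gives $\log(a(k)P) \ge \log S$, as required. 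So the real task is to reduce to the well-behaved case.

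I would therefore argue by strong induction on the lexicographic pair $(k, \sum_i l_i^2)$, with the trivial base case $c=0$ (where the LHS equals $a(0)=1$ and the RHS equals $0$). In the inductive step, if every triple $(k,n_i,l_i)$ with $l_i\ge 1$ is well-behaved, the calculation above finishes the proof (noting that $(k,n,1)$ is always well-behaved by Corollary~\ref{c:well}). Otherwise, pick $i$ with $l_i \ge 2$ for which $(k,n_i,l_i)$ fails to be well-behaved; by (the contrapositive of) Lemma~\ref{l:cases}, at least one of the inequalities (\ref{e:t1}), (\ref{e:t2}), (\ref{e:t3}) fails for this triple, and I would split into three sub-cases accordingly.

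If (\ref{e:t2}) fails, dividing the reverse inequality by $b(n_i)$ yields $a(k)F_i \ge a(k-l_i) + l_i\alpha(n_i)$, where $F_i = \rho^{l_i-1}b(n_i+l_i)/b(n_i)$. Removing index $i$ decreases $k$ by $l_i\ge 2$, so by induction $a(k-l_i)\prod_{j\ne i}F_j \ge \sum_{j\ne i}l_j\alpha(n_j)$; multiplying the failure of (\ref{e:t2}) by $\prod_{j\ne i}F_j \ge 1$ and combining the two inequalities yields (\ref{e:main2}). If (\ref{e:t3}) fails, then $a(k)F_i - a(k-1)\tilde F_i \ge \alpha(n_i)$, where $\tilde F_i$ is the factor for the triple $(k-1,n_i,l_i-1)$; applying the induction hypothesis with $l_i$ decreased by $1$ (so $k$ decreases by $1$) and multiplying the failure by $\prod_{j\ne i}F_j\ge 1$ gives (\ref{e:main2}). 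Finally, if (\ref{e:t1}) fails, then $F_i \ge (b(n_i+1)/b(n_i))^{l_i}$, so replacing the single index $i$ by $l_i$ new indices each with length $1$ and root-degree $n_i$ leaves $k$ and $S$ unchanged but decreases the LHS and strictly decreases $\sum_j l_j^2$ (since $l_i^2 > l_i$ for $l_i\ge 2$); by induction the modified inequality holds, which, together with the drop in LHS, implies the original one.

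The main obstacle is the sub-case analysis in the non-well-behaved step: one must choose the right reduction for each failed inequality and find an induction parameter (I propose the lexicographic pair $(k,\sum_i l_i^2)$) that strictly decreases in every reduction. The algebraic collapse in the well-behaved case, by contrast, is essentially forced once one notices the $x-1\ge\log x$ trick and the identity $\log a(k)+k\lambda(k)=\log(k\alpha(k))$ coming from the defining recursion (\ref{e:brecursion}) for $b$.
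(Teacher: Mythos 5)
Your proof takes essentially the same route as the paper: the same notion of well-behaved triples, the same three reductions keyed to the negations of (\ref{e:t1}), (\ref{e:t2}), (\ref{e:t3}), and the same final algebraic step in the well-behaved case (the paper applies $e^x \geq 1+x$ directly, while you use the equivalent $x - 1 \geq \log x$; both also use the identity $a(k) = k\alpha(k)(b(k)/b(k+1))^k$ coming from (\ref{e:brecursion})). Your lexicographic parameter $(k, \sum_i l_i^2)$ is interchangeable with the paper's $(k, \sum_i(l_i-1))$: both strictly decrease under each of the three reductions. All three of your reduction sub-cases are correctly set up, including the observation that each leftover factor $\rho^{l_j-1}b(n_j+l_j)/b(n_j)$ is $\geq 1$.

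There is, however, one genuine gap: you never handle $k = 1$ (equivalently $c=1$, $l_1=1$). Your only base case is $c=0$, and for $k=1$ neither branch of your inductive step applies. The reduction branch needs an index with $l_i \geq 2$, but every $l_i=1$; and the well-behaved branch is unavailable because well-behavedness, $f(k,n,l)$, and $\lambda(k)$ are all defined only for $k \geq 2$ (the factor $k(k-1)$ in the denominator of $\lambda$ vanishes at $k=1$), and Corollary~\ref{c:well} likewise only covers $k \geq 2$. This is not a vacuous corner: the induction hypothesis can call on $k=1$, e.g.\ the (\ref{e:t3}) reduction from $c=1$, $l_1=2$, $k=2$ lands exactly there, as does the (\ref{e:t2}) reduction whenever the deleted index carries all but one unit of $k$. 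The paper closes this by a direct verification that $a(1)\tfrac{b(n+1)}{b(n)} \geq \alpha(n)$ for all $n\geq 2$: immediate for $n \geq 5$ since $\alpha(5)<1$, $\alpha$ is decreasing and $b$ is increasing, and checked numerically for $n \in \{2,3,4\}$ using the chosen value of $\alpha(1)$. Adding this base case would make your argument complete.
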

\begin{proof}
We prove the claim by induction on $k$, and for a fixed $k$ by induction on $\sum_{i=1}^c (l_i-1)$.
The claim is trivial for $k=0$.
If $k=1$, then $c=l_1=1$, and we need to verify that $a(1)\tfrac{b(n+1)}{b(n)} \geq \alpha(n)$ for every $n \geq 2$.
This is trivial for $n\ge 5$, since $\alpha(5)<1$, $\alpha(n)$ is decreasing for $n\ge 2$, and $b$ is increasing.
For $n\in \{2,3,4\}$, the claim holds by the choice of $\alpha(1)$, as can be verified by a direct calculation.

Let us now assume $k\ge 2$.  Let $n=n_c$ and $l=l_c$, for brevity.  Suppose first that $l\ge 2$
and the triple $(k,n,l)$ does not satisfy at least one of the conditions (\ref{e:t1}), (\ref{e:t2}) and (\ref{e:t3})
\begin{itemize}
\item Let us consider the case that (\ref{e:t1}) is false, i.e.,
\begin{equation}\label{e:t1neg}
\rho^{l-1}\frac{b(n+l)}{b(n)}\ge \left(\frac{b(n+1)}{b(n)}\right)^l.
\end{equation}
Then we can replace $(n_c,l_c)$ by $l_c$ appearances of $(n_c,1)$,
keeping $k$ unchanged but decreasing the value of $\sum_{i=1}^c (l_i-1)$.  Applying the induction hypothesis
to the resulting sequence and combining the resulting inequality with (\ref{e:t1neg}), we obtain (\ref{e:main2})
as desired.

More precisely, by (\ref{e:t1neg}) and the induction hypothesis for the sequence $c+l-1$, $l_1,\ldots,l_{c-1},1,\ldots,1$, and $n_1,\ldots, n_{c-1}, n,\ldots, n$,
we have
\begin{align*}
a(k)\prod_{i=1}^{c}\left(\rho^{l_i-1} \frac{b(n_i+l_i)}{b(n_i)} \right)&\geq a(k)\left(\frac{b(n+1)}{b(n)}\right)^l\cdot \prod_{i=1}^{c-1}\left(\rho^{l_i-1} \frac{b(n_i+l_i)}{b(n_i)} \right)\\
&\geq l\alpha(n)+\sum_{i=1}^{c-1}l_i\alpha(n_i)=\sum_{i=1}^{c}l_i\alpha(n_i),
\end{align*}
as required.

\item Suppose now that (\ref{e:t2}) is false, i.e.,
\begin{equation}\label{e:t2neg}
a(k)\rho^{l-1}\frac{b(n+l)}{b(n)}\ge a(k-l)+ l\alpha(n). 
\end{equation}
In this case, we can remove $(n_c, l_c)$, decreasing $k$, and obtain the desired inequality from the induction hypothesis applied to the
resulting sequence, combined with (\ref{e:t2neg}).

More precisely, by (\ref{e:t2neg}) and the induction hypothesis for the sequence $c-1$, $l_1,\ldots,l_{c-1}$, and $n_1,\ldots, n_{c-1}$,
we have
\begin{align*}
a(k)\prod_{i=1}^{c}\left(\rho^{l_i-1} \frac{b(n_i+l_i)}{b(n_i)} \right)&\ge (a(k-l)+ l\alpha(n))\prod_{i=1}^{c-1}\left(\rho^{l_i-1} \frac{b(n_i+l_i)}{b(n_i)} \right)\\
&\ge l\alpha(n)+a(k-l)\prod_{i=1}^{c-1}\left(\rho^{l_i-1} \frac{b(n_i+l_i)}{b(n_i)} \right)\\
&\ge l\alpha(n)+\sum_{i=1}^{c-1}l_i\alpha(n_i)=\sum_{i=1}^c l_i\alpha(n_i).
\end{align*}

\item Finally, suppose that (\ref{e:t3}) is false, i.e.,
\begin{equation}\label{e:t3neg}
a(k)\rho^{l-1}\frac{b(n+l)}{b(n)}\ge a(k-1)\rho^{l-2}\frac{b(n+l-1)}{b(n)}+ \alpha(n).
\end{equation}
In this case, we can replace $(n_c, l_c)$ by $(n_c, l_c-1)$, decreasing $k$, and obtain the desired inequality from the induction hypothesis applied to the
resulting sequence, combined with (\ref{e:t3neg}).

More precisely, by (\ref{e:t3neg}) and the induction hypothesis for the sequence $c$, $l_1,\ldots,l_{c-1},l_c-1$, and $n_1,\ldots, n_c$,
we have
\begin{align*}
a(k)&\prod_{i=1}^{c}\left(\rho^{l_i-1} \frac{b(n_i+l_i)}{b(n_i)} \right) \\&\ge \Bigl(\alpha(n)+a(k-1)\rho^{l-2}\frac{b(n+l-1)}{b(n)}\Bigl)\prod_{i=1}^{c-1}\left(\rho^{l_i-1} \frac{b(n_i+l_i)}{b(n_i)} \right)\\
&\ge \alpha(n)+a(k-1)\rho^{l-2}\frac{b(n+l-1)}{b(n)}\prod_{i=1}^{c-1}\left(\rho^{l_i-1} \frac{b(n_i+l_i)}{b(n_i)} \right)\\
&\ge \alpha(n)+(l-1)\alpha(n)+\sum_{i=1}^{c-1}l_i\alpha(n_i)=\sum_{i=1}^c l_i\alpha(n_i).
\end{align*}
\end{itemize}

Therefore, we can assume that either $l_c=1$, or $k\ge l_c\ge 2$ and the triple $(k,n_c,l_c)$ satisfies the conditions (\ref{e:t1}), (\ref{e:t2}) and (\ref{e:t3}).
By Corollary~\ref{c:well} and Lemma~\ref{l:cases}, it follows that $(k,n_c,l_c)$ is well-behaved.
By symmetry, we can assume that the triple $(k,n_i,l_i)$ is well-behaved for $i=1,\ldots,c$.
Then 
 \begin{align*}  
  a(k)&\prod_{i=1}^{c}\left(\rho^{l_i-1} \frac{b(n_i+l_i)}{b(n_i)}\right) 
  \\& \stackrel{(\ref{e:brecursion})}{=} \alpha(k) \cdot k \left( \frac{b(k)}{b(k+1)}\right)^k \cdot \prod_{i=1}^{c}\left(\rho^{l_i-1} \frac{b(n_i+l_i)}{b(n_i)}  \right)
  	\\ &= k\alpha(k)\prod_{i=1}^{c}\left(\left( \frac{b(k)}{b(k+1)}\right)^{l_i} \rho^{l_i-1} \frac{b(n_i+l_i)}{b(n_i)} \right) 
 	 \\&= k\alpha(k) \prod_{i=1}^{c}e^{\frac{1}{k\alpha(k)}f(k,n_i,l_i)}=k\alpha(k) e^{\frac{1}{k\alpha(k)}\sum_{i=1}^cf(k,n_i,l_i)} 
 \\	& \geq k\alpha(k) \left(1 + \frac{1}{k\alpha(k)}\sum_{i=1}^cf(k,n_i,l_i)\right)
 	   \geq k\alpha(k) + \sum_{i=1}^cl_i(\alpha(n_i)-\alpha(k)) \\&=  \sum_{i=1}^cl_i \alpha(n_i),
 \end{align*}
as desired.
\end{proof}

It is now easy to finish the proof of Lemma~\ref{lemma-main-ineq}.

\begin{lemmain}[Repeated]
\smain
\end{lemmain}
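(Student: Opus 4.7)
The plan is to reduce the general statement to the special case in which $l_i=d_i$ for every $i$, which is exactly Lemma~\ref{l:main}. Indeed, $\phi(l,l)=\rho^{l-2}b(2)=\rho^{l-1}$, so multiplying both sides of the conclusion of Lemma~\ref{l:main} by $\prod_i b(n_i)$ yields
$$a\Bigl(\sum_{i=1}^c l_i\Bigr)\prod_{i=1}^c \rho^{l_i-1}b(n_i+l_i)\ge \Bigl(\sum_{i=1}^c l_i\alpha(n_i)\Bigr)\prod_{i=1}^c b(n_i),$$
which is the desired inequality with each factor $\phi(l_i,d_i)b(n_i+d_i)$ replaced by $\rho^{l_i-1}b(n_i+l_i)$. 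Consequently it suffices to prove the single-factor inequality
$$\phi(l,d)\,b(n+d)\ge \rho^{l-1}\,b(n+l) \quad\text{for all integers } l\ge d\ge 1,\ n\ge 2,$$
multiply the $c$ such inequalities, and multiply through by $a(\sum_i l_i)$.

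To verify this single-factor inequality, I would unfold $\phi(l,d)=\rho^{l-2}b(l-d+2)$ and cancel $\rho^{l-2}$ from both sides; the claim becomes $b(l-d+2)\,b(n+d)\ge b(2)\,b(n+l)$. This is a direct instance of Lemma~\ref{lemma-funnelbase-ineq}: substituting $m := l-d+2$ and using $n+l$ in place of the lemma's dummy $n$, the quantity $(n+l)-(l-d+2)+2$ equals $n+d$, and the hypotheses $n+l\ge l-d+2\ge 2$ are equivalent to $l\ge d$ (given) and $n+d\ge 2$ (which holds because $n\ge 2$).

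Putting the two ingredients together finishes the proof: multiplying the single-factor bound over $i=1,\dots,c$ and then by $a(\sum_i l_i)$ gives exactly the inequality of Lemma~\ref{l:main}, whose conclusion is the target. There is really no obstacle in this last step, since the hard work has already been done in Lemma~\ref{l:main}, which handles the ``path-like'' funnels via the well-behaved triple machinery, and in Lemma~\ref{lemma-funnelbase-ineq}, the concavity bound that lets us trade a deep branch for its straight-line counterpart. Lemma~\ref{lemma-main-ineq} is the bookkeeping step that bridges these two.
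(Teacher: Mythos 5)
Your proof is correct and follows essentially the same route as the paper: you reduce each factor via $\phi(l_i,d_i)b(n_i+d_i)\ge \rho^{l_i-1}b(n_i+l_i)$, justified by Lemma~\ref{lemma-funnelbase-ineq} with $m=l_i-d_i+2$ and the lemma's dummy variable set to $n_i+l_i$, and then invoke Lemma~\ref{l:main}. The verification of the hypotheses $n+l\ge l-d+2\ge 2$ is also handled correctly.
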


\begin{proof}
Let $k=\sum_{i=1}^c l_i$.
Note that by Lemma~\ref{lemma-funnelbase-ineq}, we have
$$\phi(l_i,d_i)b(n_i+d_i)=\rho^{l_i-2}b(l_i-d_i+2)b(n_i+d_i)\ge \rho^{l_i-2}b(2)b(n_i+l_i)=\rho^{l_i-1}b(n_i+l_i)$$ for $i=1,\ldots,c$.
Using Lemma~\ref{l:main}, we obtain
\begin{align*}
a(k)\prod_{i=1}^c \phi(l_i,d_i)b(n_i+d_i)&\ge a(k)\prod_{i=1}^{c}\rho^{l_i-1}b(n_i+l_i)\\
&= a(k)\prod_{i=1}^{c}\left(\rho^{l_i-1} \frac{b(n_i+l_i)}{b(n_i)} \right) \cdot \prod_{i=1}^c b(n_i)\\
&\ge \Bigl(\sum_{i=1}^c l_i\alpha(n_i)\Bigr)\cdot\prod_{i=1}^c b(n_i),
\end{align*}
as desired.
\end{proof}

\noindent {\bf Acknowledgement.} This research was partially completed at the \emph{2019 Barbados Graph Theory Workshop} held at the Bellairs Research Institute. We thank the participants of the workshop for providing a stimulating enviroment for research, and Michelle Delcourt and Alex Roberts, in particular, for helpful discussions. Jon Noel would like to thank Jaehoon Kim for telling him about Kir\'{a}ly's conjecture and letting him share it with participants of the workshop.

\bibliographystyle{acm}
\bibliography{numcycles.bib}

\end{document}